\documentclass[12pt,oneside]{amsart}
\usepackage{amsthm,verbatim,mathtools,cite}
\usepackage{amssymb,latexsym,longtable}
\usepackage{amsmath}
\usepackage{graphicx}
\usepackage{color}
\usepackage[dvipsnames]{xcolor}
\usepackage{cancel}
\usepackage{soul}
\usepackage{enumerate}
\newtheorem{theorem}{Theorem}[section]
\newtheorem{definition}[theorem]{Definition}

\newtheorem{lemma}[theorem]{Lemma}
\newtheorem{corollary}[theorem]{Corollary}
\newtheorem{proposition}[theorem]{Proposition}
\newtheorem{ex}[theorem]{Example}

\newcommand{\mdan}[1]{{{\color{blue}#1}}}

\def\ariane#1 {\fbox {\footnote {\ }}\ \footnotetext { From Ariane: {\color{red}#1}}}
\def\daniele#1 {\fbox {\footnote {\ }}\ \footnotetext { From Daniele: {\color{blue}#1}}}
\def\lu#1 {\fbox {\footnote {\ }}\ \footnotetext { From Luciane: {\color{YellowOrange}#1}}}

\textwidth =18 cm
\evensidemargin = -0.5 cm
\oddsidemargin = -0.5 cm

\begin{document}
\title{Permutation polynomials over $\mathbb{F}_{q^2}$ from rational functions }
\author{Daniele Bartoli, Ariane M. Masuda, and Luciane Quoos}
\date{\today}
\address{Dipartimento di Matematica e Informatica, Universit\`a degli Studi di Perugia, Via Vanvitelli 1, Perugia, 06123   Italy}

\email {daniele.bartoli@unipg.it}

\address{Department of Mathematics, New York City College of Technology, CUNY, 300 Jay Street, Brooklyn, NY 11201 USA}

\email{amasuda@citytech.cuny.edu}

\address{Instituto de Matem\'atica, Universidade Federal do Rio de Janeiro,  Av. Athos da Silveira Ramos 149, Centro de Tecnologia - Bloco C, Ilha do Fund\~ao, Rio de Janeiro, RJ 21941-909  Brazil}

\email{luciane@im.ufrj.br}
\maketitle

\begin{abstract}
Let $\mu_{q+1}$ denote the set of $(q+1)$-th roots of unity in $\mathbb{F}_{q^2 }$. We construct permutation polynomials over $\mathbb{F}_{q^2}$ by using rational functions of any degree that induce bijections either on $\mu_{q+1}$  or between $\mu_{q+1}$ and $\mathbb{F}_q \cup \{\infty\}$. In particular, we generalize results from Zieve.
\end{abstract}

{\let\thefootnote\relax\footnotetext{MSC: 05A05, 11T06; Keywords: permutation polynomials, rational functions.}}

\section{Introduction}

Let $\mathbb{F}_q$ be the finite field of $q$ elements. A polynomial is said to be a {\it permutation polynomial} over $\mathbb{F}_q$ if the associated function induces a one-to-one map from $\mathbb{F}_q$ to itself. Over the past decades the research on permutation polynomials has been intensified due to their applications in many areas such as coding theory~\cite{L2007} and cryptography~\cite{LM1984,SH1998}. A problem of great interest is to find families of permutation polynomials over a finite field.  For an introduction on the basic properties, constructions and applications, we refer the reader to~\cite{LN1997,M1993, MP2013,Hou2015}.

For  a divisor $m \geq 2$ of $q-1$, let $\mu_{m}$ denote the set of $m$-th roots of unity in $\mathbb{F}_q$. Recently, many authors have used the following criterion, due to Tucker and Zieve~\cite{TZ2000}, to determine whether or not a polynomial permutes $\mathbb F_q$.

\begin{theorem}\label{ppcriterion}
Let $h\in\mathbb F_q[x]$ and integers $d,r>0$ such that $d\mid (q-1)$. Then $x^rh(x^{d})$ permutes $\mathbb F_q$ if and only if
$(r,d)=1$ and $x^rh(x)^{d}$ permutes $\mu_{(q-1)/d}$.
\end{theorem}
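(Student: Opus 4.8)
The plan is to work multiplicatively on $\mathbb{F}_q^*$, using that $d\mid q-1$ so the $d$-th power map sends $\mathbb{F}_q^*$ onto the group $\mu_{(q-1)/d}$ of $d$-th powers, with every fibre a coset of $\mu_d$. Set $f(x)=x^rh(x^d)$ and $g(x)=x^rh(x)^d$. The key observation is a ``descent'': for $x\in\mathbb{F}_q^*$ with $\zeta:=x^d\in\mu_{(q-1)/d}$ we have $f(x)=x^rh(\zeta)$, hence $f(x)^d=\zeta^rh(\zeta)^d=g(\zeta)$. Also $f(0)=0$ since $r>0$, so $f$ permutes $\mathbb{F}_q$ if and only if it permutes $\mathbb{F}_q^*$. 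A second ingredient used repeatedly is that $(r,d)=1$ makes both the $r$-th power map on $\mu_d$ a bijection and, via Bézout $ar+bd=1$, lets one recover $x$ from the pair $(x^r,x^d)$.

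For the direction ($\Leftarrow$), assume $(r,d)=1$ and that $g$ permutes $\mu_{(q-1)/d}$. Since $0\notin\mu_{(q-1)/d}$, this forces $g(\zeta)\neq 0$, hence $h(\zeta)\neq 0$, for all $\zeta\in\mu_{(q-1)/d}$, so $f$ maps $\mathbb{F}_q^*$ into $\mathbb{F}_q^*$. If $f(x_1)=f(x_2)$ with $x_i\neq 0$, the descent gives $g(x_1^d)=g(x_2^d)$, so by injectivity of $g$ on $\mu_{(q-1)/d}$ we get $x_1^d=x_2^d=:\zeta$; then $f(x_i)=x_i^rh(\zeta)$ with $h(\zeta)\neq 0$ yields $x_1^r=x_2^r$, and Bézout gives $x_1=(x_1^r)^a(x_1^d)^b=(x_2^r)^a(x_2^d)^b=x_2$. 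So $f$ is injective on $\mathbb{F}_q^*$, hence a permutation of $\mathbb{F}_q$.

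For the direction ($\Rightarrow$), assume $f$ permutes $\mathbb{F}_q$. If $e:=\gcd(r,d)>1$, then $f$ is a polynomial in $x^e$, and since $e\mid q-1$ the map $x\mapsto x^e$ identifies $x$ with $\omega x$ for a primitive $e$-th root of unity $\omega$, contradicting injectivity of $f$; hence $(r,d)=1$. Next, $f$ permutes $\mathbb{F}_q^*$, so the descent shows $g$ maps $\mu_{(q-1)/d}$ into itself. For injectivity of $g$ there, suppose $g(\zeta_1)=g(\zeta_2)$ and pick $x_i$ with $x_i^d=\zeta_i$; then $f(x_1)^d=f(x_2)^d$, so $f(x_1)=\omega f(x_2)$ for some $\omega\in\mu_d$. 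Choosing $\omega'\in\mu_d$ with $(\omega')^r=\omega$ (possible since $(r,d)=1$), one computes $f(\omega'x_2)=(\omega')^rx_2^rh((\omega')^d\zeta_2)=\omega f(x_2)=f(x_1)$, so injectivity of $f$ forces $x_1=\omega'x_2$ and thus $\zeta_1=\zeta_2$. As $\mu_{(q-1)/d}$ is finite, $g$ permutes it.

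The main obstacle is precisely the injectivity step in ($\Rightarrow$): passing from $g(\zeta_1)=g(\zeta_2)$ to equality of $\zeta_1,\zeta_2$ only recovers $f(x_1)$ and $f(x_2)$ up to a factor in $\mu_d$, and this ambiguity must be absorbed by perturbing $x_2$ inside its fibre, which is exactly the second place where $(r,d)=1$ is essential (to solve $(\omega')^r=\omega$ in $\mu_d$). Everything else is routine bookkeeping with the coset decomposition and Bézout's identity.
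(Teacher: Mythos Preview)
The paper does not prove this theorem: it is quoted in the introduction as a known criterion due to Tucker and Zieve \cite{TZ2000} and used as a black box throughout. Your argument is correct and is essentially the standard proof of this result (the ``descent'' identity $f(x)^d=g(x^d)$, together with $(r,d)=1$ to handle the $\mu_d$-ambiguity in each fibre, is exactly how this criterion is usually established). There is therefore nothing in the paper to compare against; your write-up would serve as a perfectly adequate self-contained proof.
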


This method relies on polynomials permuting $\mu_m$ for certain values of $m$. In~\cite{Zieve2013} Zieve introduces a variant of this
approach based on functions that are induced by rational functions that permute $\mu_m$.  He completely classifies all degree-one rational functions over $\overline{\mathbb{F}}_{q}$ that are bijections $\mu_{q+1}\to \mu_{q+1}$ and $\mu_{q+1}\to\mathbb{F}_q \cup \{\infty \}$; here $\mu_{q+1}$ is the set of $(q+1)$-th roots of unity in $\mathbb{F}_{q^2}$. As a consequence, he obtains the following classes of permutation polynomials over $\mathbb{F}_{q^2}$.

\begin{theorem}\label{Zieve2013Theorem1.1}
Let  $n>0$ and $k\geq 0$ be integers, and let $\beta,\gamma\in\mathbb{F}_{q^2}$ satisfy $\beta^{q+1}=1$ and $\gamma^{q+1}\neq 1$. Then
$$x^{n+k(q+1)} ((\gamma x^{q-1}-\beta)^n-\gamma(x^{q-1}-\gamma^{q}\beta)^{n})$$
permutes $\mathbb{F}_{q^2}$ if and only if $(n+2k,q-1)=1$ and $(n,q+1)=1$.
\end{theorem}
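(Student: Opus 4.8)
The plan is to reduce, by means of Theorem~\ref{ppcriterion}, the assertion that $f$ permutes $\mathbb{F}_{q^2}$ to an assertion about a self-map of $\mu_{q+1}$, and then to recognize that self-map as a composition of two M\"obius transformations, the $n$-th power map on $\mu_{q+1}$, and a multiplication by a constant.

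First I would put $f$ in the shape $x^{r}h(x^{q-1})$. Using $n+k(q+1)=(n+2k)+k(q-1)$ and setting $r=n+2k$, $d=q-1$, and
\[
h(y)=y^{k}\bigl((\gamma y-\beta)^{n}-\gamma(y-\gamma^{q}\beta)^{n}\bigr)\in\mathbb{F}_{q^2}[y],
\]
one has $f(x)=x^{r}h(x^{d})$. Since $d=q-1$ divides $q^2-1$ with $(q^2-1)/d=q+1$, Theorem~\ref{ppcriterion}, applied with $\mathbb{F}_{q^2}$ in place of $\mathbb{F}_q$, says that $f$ permutes $\mathbb{F}_{q^2}$ if and only if $(n+2k,q-1)=1$ and $x^{n+2k}h(x)^{q-1}$ permutes $\mu_{q+1}$. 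So it remains to prove that $x^{n+2k}h(x)^{q-1}$ permutes $\mu_{q+1}$ if and only if $(n,q+1)=1$.

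Next I would evaluate $x^{n+2k}h(x)^{q-1}$ on $\mu_{q+1}$, where $x^{q}=x^{-1}$, $\beta^{q}=\beta^{-1}$, and $\gamma^{q^{2}}=\gamma$. Raising $h(x)$ to the $q$-th power, pulling the factor $x^{-1}\beta^{-1}$ out of each linear factor, and canceling the resulting powers of $x$ and $\beta$, one obtains for $x\in\mu_{q+1}$
\[
x^{n+2k}h(x)^{q-1}=(-1)^{n}\beta^{-n}\,\frac{(x-\gamma^{q}\beta)^{n}-\gamma^{q}(\gamma x-\beta)^{n}}{(\gamma x-\beta)^{n}-\gamma(x-\gamma^{q}\beta)^{n}}=(-1)^{n}\beta^{-n}\,\frac{1-\gamma^{q}u}{u-\gamma},
\]
where $u=\left(\frac{\gamma x-\beta}{x-\gamma^{q}\beta}\right)^{n}$; here one uses that $h$ has no zero on $\mu_{q+1}$, which holds because $\gamma^{q+1}\neq1$. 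Hence, writing $w=\frac{\gamma x-\beta}{x-\gamma^{q}\beta}$, the self-map of $\mu_{q+1}$ induced by $x^{n+2k}h(x)^{q-1}$ is the composition of (a) $x\mapsto w$, (b) $w\mapsto u=w^{n}$, (c) $u\mapsto\frac{1-\gamma^{q}u}{u-\gamma}$, and (d) $v\mapsto(-1)^{n}\beta^{-n}v$.

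Finally I would verify that (a), (c), and (d) are always bijections of $\mu_{q+1}$, while (b) is a bijection precisely when $(n,q+1)=1$. For (a) and (c): the hypotheses $\gamma^{q+1}\neq1$ and $\beta^{q+1}=1$ make the two M\"obius maps non-degenerate and keep their poles and zeros off $\mu_{q+1}$, and applying $t\mapsto t^{q}$ gives $w^{q}=w^{-1}$ and $\left(\frac{1-\gamma^{q}u}{u-\gamma}\right)^{q}=\left(\frac{1-\gamma^{q}u}{u-\gamma}\right)^{-1}$, so both maps send $\mu_{q+1}$ into $\mu_{q+1}$, and being injective they are bijections of the finite set $\mu_{q+1}$. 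For (d): $(-1)^{n}\beta^{-n}\in\mu_{q+1}$, so multiplication by it is a bijection. For (b): $\mu_{q+1}$ is cyclic of order $q+1$, so $w\mapsto w^{n}$ is a bijection if and only if $(n,q+1)=1$. Therefore the composition permutes $\mu_{q+1}$ if and only if $(n,q+1)=1$, which, combined with $(n+2k,q-1)=1$, is the stated condition. The main obstacle is the explicit simplification above: one must carry out the factorization of $h(x)^{q}$ so that the ratio collapses to a M\"obius function of $u=w^{n}$, and confirm the well-definedness claims (that $h$ does not vanish on $\mu_{q+1}$ and that neither M\"obius map sends a point of $\mu_{q+1}$ to $0$ or $\infty$), all of which reduce to $\gamma^{q+1}\neq1$ and $\beta^{q+1}=1$; the degenerate subcase $\gamma=0$, where $f$ is a monomial, is checked directly and is consistent with the stated condition.
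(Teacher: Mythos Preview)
Your proof is correct and follows essentially the direct route: apply Theorem~\ref{ppcriterion}, compute $x^{n+2k}h(x)^{q-1}$ explicitly on $\mu_{q+1}$, and factor the resulting self-map as a composition of two M\"obius maps on $\mu_{q+1}$, the $n$-th power map, and a scalar. The paper takes a genuinely different route, precisely because its purpose is to exhibit Theorem~\ref{Zieve2013Theorem1.1} as a special case of the authors' new machinery: they set $L(x)=x-\gamma^q\beta$, $M(x)=\gamma x-\beta$, observe $L\sim_{-1/\beta}M$, pass to $M^n\sim_{(-1/\beta)^n}L^n$ via Lemma~\ref{assoc_prop}, compose with a second $\beta$-associated pair $(\ell,\widetilde\ell)$ via Proposition~\ref{prop:composition}, and then invoke Theorem~\ref{prop:good}. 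What your approach buys is self-containment: it needs only Theorem~\ref{ppcriterion} and a direct calculation, with no intermediate framework. What the paper's approach buys is a template: the same argument, with $L,M$ replaced by higher-degree $\beta$-associated pairs, immediately yields Theorem~\ref{var} and the families in Section~\ref{Sec3}, which is the point of the paper. Your verification that $h$ has no zeros on $\mu_{q+1}$ and that both M\"obius maps preserve $\mu_{q+1}$ is exactly what the paper encodes abstractly in the $\beta$-associated condition and in Proposition~\ref{Zieve2013,Lemm A_2.1}.
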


\begin{theorem}\label{Zieve2013Theorem1.2}
Let $n>0$ and $k\geq 0$ be integers, and let $\beta, \delta\in\mathbb{F}_{q^2}$ satisfy $\beta^{q+1}=1$ and  $\delta \notin \mathbb{F}_q$. Then
$$x^{n+k(q+1)} ((\delta x^{q-1}-\beta\delta^q)^n-\delta(x^{q-1}-\beta)^{n})$$
permutes $\mathbb{F}_{q^2}$ if and only if $(n(n+2k),q-1)=1$.
\end{theorem}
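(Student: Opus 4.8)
The plan is to reduce to a question about $\mu_{q+1}$ via the criterion of Theorem~\ref{ppcriterion}, applied over $\mathbb{F}_{q^2}$ with $d=q-1$ (so that $(q^2-1)/d=q+1$), and then to analyze the resulting self-map of $\mu_{q+1}$ by exhibiting it as a composition of the $n$-th power map with two degree-one rational functions. I would first write $n+k(q+1)=(n+2k)+k(q-1)$, so that the displayed polynomial equals $x^{\,n+2k}\,g(x^{q-1})$ with
$$g(y)=y^{k}\bigl((\delta y-\beta\delta^{q})^{n}-\delta(y-\beta)^{n}\bigr)\in\mathbb{F}_{q^{2}}[y].$$
By Theorem~\ref{ppcriterion}, this permutes $\mathbb{F}_{q^2}$ if and only if $\gcd(n+2k,q-1)=1$ and $x^{\,n+2k}g(x)^{q-1}$ induces a permutation of $\mu_{q+1}$; so the whole problem becomes the analysis of that last map.

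The next step is a direct computation of $g(x)^{q-1}=g(x)^{q}/g(x)$ for $x\in\mu_{q+1}$, using $x^{q}=x^{-1}$, $\beta^{q}=\beta^{-1}$ (from $\beta^{q+1}=1$) and $\delta^{q^{2}}=\delta$. Pulling $x^{-1}$ out of each linear factor of $g(x)^q$ and keeping track of the resulting powers of $-1$ and $\beta$, one obtains, for $x\in\mu_{q+1}$,
$$g(x)^{q-1}=(-1)^{n}\beta^{-n}\,x^{-(2k+n)}\cdot\frac{(\delta x-\beta\delta^{q})^{n}-\delta^{q}(x-\beta)^{n}}{(\delta x-\beta\delta^{q})^{n}-\delta(x-\beta)^{n}},$$
where one also checks that the denominator never vanishes on $\mu_{q+1}$ (this follows from the description of the map given in the next paragraph). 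Multiplying by $x^{\,n+2k}$ the prefactor cancels, leaving
$$x^{\,n+2k}g(x)^{q-1}=c\cdot\frac{(\delta x-\beta\delta^{q})^{n}-\delta^{q}(x-\beta)^{n}}{(\delta x-\beta\delta^{q})^{n}-\delta(x-\beta)^{n}},\qquad c:=(-1)^{n}\beta^{-n}\in\mu_{q+1},$$
and since multiplication by $c$ is a bijection of $\mu_{q+1}$ it remains to decide when the rational map on the right permutes $\mu_{q+1}$.

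For this I would introduce the degree-one rational functions $\psi(x)=\dfrac{\delta x-\beta\delta^{q}}{x-\beta}$ and $\rho(s)=\dfrac{s-\delta^{q}}{s-\delta}$ (both nondegenerate since $\delta\notin\mathbb{F}_q$). Using $\delta\notin\mathbb{F}_q$ and $\beta^{q+1}=1$, a short Frobenius computation gives $\psi(x)^{q}=\psi(x)$ for every $x\in\mu_{q+1}$, so $\psi$ maps $\mu_{q+1}$ into $\mathbb{F}_q\cup\{\infty\}$; being a M\"obius map it is in fact a bijection $\mu_{q+1}\to\mathbb{F}_q\cup\{\infty\}$ (one of the degree-one maps in Zieve's classification). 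Symmetrically, $\rho(s)^{q+1}=1$ for $s\in\mathbb{F}_q\cup\{\infty\}$, so $\rho$ is a bijection $\mathbb{F}_q\cup\{\infty\}\to\mu_{q+1}$. Dividing numerator and denominator of the fraction above by $(x-\beta)^{n}$ now displays the map as $x\mapsto\rho\bigl(\psi(x)^{n}\bigr)$, the case $x=\beta$ (where $\psi(\beta)=\infty$) being covered by $\rho(\infty)=1$; in particular $\psi(x)^{n}\in\mathbb{F}_q\cup\{\infty\}$ can never equal $\delta$, which is the non-vanishing of the denominator promised above. As $\psi$ and $\rho$ are bijections between $\mu_{q+1}$ and $\mathbb{F}_q\cup\{\infty\}$, this composition permutes $\mu_{q+1}$ if and only if $s\mapsto s^{n}$ permutes $\mathbb{F}_q\cup\{\infty\}$, i.e. (since $n>0$) if and only if $\gcd(n,q-1)=1$. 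Combining with $\gcd(n+2k,q-1)=1$, and observing that both conditions hold together precisely when $\gcd\bigl(n(n+2k),q-1\bigr)=1$, yields the theorem; no separate converse is needed, since every step above is an equivalence.

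I expect the main difficulty to be twofold. The routine but error-prone part is the conjugation of $g(x)$ by $x\mapsto x^{q}$: one must track the powers of $x$, $\beta$, and $-1$ precisely enough to see the $x^{\,n+2k}$ prefactor cancel and leave a clean rational function on $\mu_{q+1}$. The genuinely structural point — and the one that makes the gcd condition come out as $\gcd\bigl(n(n+2k),q-1\bigr)=1$ rather than involving $q+1$ — is that, because $\delta\notin\mathbb{F}_q$, the relevant degree-one map $\psi$ carries $\mu_{q+1}$ onto $\mathbb{F}_q\cup\{\infty\}$ rather than onto $\mu_{q+1}$; hence the intermediate power map lives on $\mathbb{F}_q\cup\{\infty\}$, forcing $\gcd(n,q-1)=1$. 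This is exactly the feature that distinguishes the present statement from Theorem~\ref{Zieve2013Theorem1.1}, where the analogous degree-one map is a self-bijection of $\mu_{q+1}$ and the corresponding condition is $\gcd(n,q+1)=1$.
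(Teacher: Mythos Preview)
Your proof is correct. Both your argument and the paper's hinge on the same structural insight: the degree-one map $\psi(x)=(\delta x-\beta\delta^{q})/(x-\beta)$ is a bijection $\mu_{q+1}\to\mathbb{F}_q\cup\{\infty\}$, so after reducing via Theorem~\ref{ppcriterion} the resulting self-map of $\mu_{q+1}$ factors through the $n$-th power on $\mathbb{F}_q\cup\{\infty\}$, forcing the condition $\gcd(n,q-1)=1$ rather than $\gcd(n,q+1)=1$.

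The difference is one of packaging. The paper deduces the result from its general Theorem~\ref{mainTheorem2}, taking $L(x)=\delta x-\beta\delta^{q}$, $M(x)=x-\beta$ (both $(-\beta^{-1},1)$-self-associated) and $H(x)=x^{n}-\delta$, and then verifying the three abstract hypotheses there; in particular the condition ``$\prod_{\xi\in\mathbb{F}_q}(H(x)-\xi H(y))$ has no $\mathbb{F}_q$-rational points off $x=y$'' unwinds exactly to $\gcd(n,q-1)=1$. You instead compute $g(x)^{q-1}$ directly on $\mu_{q+1}$ and exhibit the decomposition $\rho\circ(\cdot)^{n}\circ\psi$ explicitly, with $\rho(s)=(s-\delta^{q})/(s-\delta)$ the inverse-direction bijection. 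Your route is closer to Zieve's original argument and is more self-contained; the paper's route illustrates how the statement sits inside their $(\beta,t)$-self-associated framework. The mathematical content is the same.
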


In this work we present the notion of a pair of polynomials $(L,M)$ being \emph{$\beta$-associated} in  $\mathbb{F}_{q^2}[x]$ where $L$ and $M$ have the same degree. A similar concept is that of $L$ being  \emph{$(\beta,t)$-self-associated}. Our key observation is that the degree-one rational functions $L/M$ that are bijections $\mu_{q+1}\to \mu_{q+1}$ are such that $(L,M)$ is $\beta$-associated.  In addition, the degree-one rational functions $L/M$ that are bijections $\mu_{q+1}\to\mathbb{F}_q \cup \{\infty \}$ are such that $L$
and $M$ are $(\beta,t)$-self-associated. By investigating  $\beta$-associated polynomials and $(\beta,t)$-self-associated-polynomials of degree greater than one in $\mathbb F_{q^2}[x]$, we  are able  to extend Theorems~\ref{Zieve2013Theorem1.1} and~\ref{Zieve2013Theorem1.2}.
In addition, we construct several new families of permutation polynomials arising from degree-$n$ rational functions with $n>1$.

 We use a standard approach to verify that a certain rational function satisfies a permuting condition.  More specifically, in order to check that a rational function $F$   induces a bijection on $\mu_{q+1}$,
we consider  the plane algebraic curve of affine equation
$$
\mathcal C_F:\frac{F(x)-F(y)}{x-y}=0.
$$
Then $F$ permutes $\mu_{q+1}$ if and only if  $\mathcal C_F$ has no points $(a,b)\in \mu_{q+1}^2$ with $a\neq b$. We follow a similar
idea when we require a bijection between $\mu_{q+1}$ and $\mathbb{F}_q \cup \{\infty \}$. An effective method to check the existence of those  particular $\mathbb{F}_{q^2}$-rational points is to determine the absolutely irreducible components of $\mathcal{C}_F$; see \cite{BG2018,Bartoli2018}.



Our paper is organized in the following way.  In Section 2 we introduce the idea of  \emph{$\beta$-associated} and  \emph{($\beta$,t)-self-associated} for polynomials in $\mathbb F_{q^2}[x]$. We list some immediate properties that we use throughout the paper. In Sections~\ref{Sec3} and~\ref{Sec5} we present our generalizations of Theorems~\ref{Zieve2013Theorem1.1} and~\ref{Zieve2013Theorem1.2}, respectively; see Theorems~\ref{prop:good} and~\ref{mainTheorem2}. Our results are followed by several explicit constructions of  polynomials that can be used to form new classes of permutation polynomials over
$\mathbb F_{q^2}$.  In Section~\ref{Sec4} we provide a generalization of our own results for other extension fields, $\mathbb{F}_{q^k}$,
leading to new permutation polynomials over $\mathbb F_{q^3}$.

\section{$\beta$-associated polynomials}
In~\cite{Zieve2013} Zieve describes precisely the degree-one rational functions over $\overline{\mathbb F}_q$ that are bijections when restricted to $\mu_{q+1}\to \mu_{q+1}$ and $\mu_{q+1}
\to\mathbb{F}_q \cup \{\infty\}$.

\begin{proposition}\label{Zieve2013,Lemm A_2.1}
Let  $\ell\in\overline{\mathbb F}_q(x)$ be a degree-one rational function.  Then $\ell$ induces a bijection
on $\mu_{q+1}$ if and only if $\ell$ equals either
 $\beta/x$ with $\beta \in\mu_{q+1}$ or
$(x-\gamma^q\beta)/(\gamma x-\beta)$ with $\beta\in\mu_{q+1}$ and $\gamma\in\mathbb F_{q^2}\setminus \mu_{q+1}$.
\end{proposition}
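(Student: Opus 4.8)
The plan is to characterize degree-one rational functions $\ell(x) = (ax+b)/(cx+d)$ with $ad - bc \neq 0$ that map $\mu_{q+1}$ bijectively to itself, using the basic fact that $\mu_{q+1} = \{z \in \overline{\mathbb F}_q : z^{q+1} = 1\}$ is precisely the set of elements satisfying $\bar z = z^q = 1/z$ (the "unit circle" of $\mathbb F_{q^2}$), together with the observation that a degree-one rational map is automatically injective on $\overline{\mathbb F}_q \cup \{\infty\}$, so the only real content is that $\ell$ maps $\mu_{q+1}$ \emph{into} (equivalently onto, by finiteness) $\mu_{q+1}$.

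First I would set up the conjugation map $\sigma : z \mapsto z^q$ on $\mathbb F_{q^2} \cup \{\infty\}$ and note that $z \in \mu_{q+1} \cup \{\infty, 0\}$-type considerations aside, the condition "$\ell(\mu_{q+1}) \subseteq \mu_{q+1}$" can be rephrased as: for all $z$ with $z^{q+1}=1$, one has $\ell(z)^{q+1} = 1$, i.e. $\ell(z) \cdot \ell(z)^q = 1$. Writing $\ell^{(q)}$ for the rational function obtained by raising all coefficients to the $q$-th power, and using $z^q = 1/z$ on $\mu_{q+1}$, the identity $\ell(z)\,\ell^{(q)}(1/z) = 1$ must hold for the $q+1 \geq 2$ distinct values $z \in \mu_{q+1}$. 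Both sides are rational functions in $z$ of bounded degree, so if $q+1$ exceeds that degree bound they must coincide as rational functions; this converts the problem into a functional equation $\ell(x) \cdot \ell^{(q)}(1/x) = 1$ identically in $x$. (A small side check handles tiny $q$, or one argues directly that a degree-one map sending a $(q+1)$-set into a $(q+1)$-set and the functional-equation approach agree.)

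Next I would solve that functional equation. Clearing denominators, $\ell(x) = (ax+b)/(cx+d)$ gives $\ell^{(q)}(1/x) = (a^q/x + b^q)/(c^q/x + d^q) = (a^q + b^q x)/(c^q + d^q x)$, so the equation becomes $(ax+b)(b^q x + a^q) = (cx+d)(d^q x + c^q)$ as polynomials in $x$. Comparing coefficients of $x^2$, $x^1$, $x^0$ yields $a b^q = c d^q$, $a a^q + b b^q = c c^q + d d^q$, and $b a^q = d c^q$. From here I would do the standard case analysis: if $c = 0$ then $b a^q = 0$ forces (since $a \neq 0$ as $ad \neq 0$) $b = 0$, so $\ell = (a/d)x$; then $a a^q = d d^q$ says $(a/d)^{q+1} = 1$, giving $\ell = \beta' x$ with $\beta' \in \mu_{q+1}$ — but wait, Zieve's statement writes $\beta/x$, so I'd instead normalize to see the map is $x \mapsto \beta x$ or, after the coordinate convention in the proposition, matches $\beta/x$; I would reconcile the normalization carefully (the two forms $\beta x$ and $\beta/x$ differ by the involution $x \mapsto 1/x$ which itself lies in the second family with $\gamma$ chosen appropriately, so one must be precise about which normal forms are being listed). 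If $c \neq 0$, normalize $c = 1$; then $b a^q = d$ and $a b^q = d^q$ are consistent, and one can solve for the shape $(x - \gamma^q\beta)/(\gamma x - \beta)$ by setting $\gamma, \beta$ in terms of $a, b, d$ and checking $\beta^{q+1} = 1$ and $\gamma \notin \mu_{q+1}$ (the latter being exactly the non-degeneracy $ad - bc \neq 0$ in disguise).

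\textbf{The main obstacle} I anticipate is not any single computation but rather (i) handling the small-$q$ / low-cardinality edge where "agree on $q+1$ points" does not immediately force "agree as rational functions," which I would sidestep by checking those cases by hand or by a dimension count on the space of $\ell$'s, and (ii) bookkeeping the projective normalization so that the final two normal forms $\beta/x$ and $(x-\gamma^q\beta)/(\gamma x - \beta)$ come out exactly as stated rather than up to an unlisted change of coordinates; in particular I must verify the converse direction (that both listed families genuinely induce bijections of $\mu_{q+1}$), which is a short direct substitution: for $\ell = \beta/x$ one has $\ell(z)^{q+1} = \beta^{q+1}/z^{q+1} = 1$, and for $\ell = (x - \gamma^q\beta)/(\gamma x - \beta)$ one computes $\ell(z)^{q+1}$ on $\mu_{q+1}$ using $z^q = 1/z$ and $\beta^{q+1} = 1$ and checks it equals $1$, while injectivity is automatic and the image having full size $q+1$ follows from finiteness once we know $\gamma \notin \mu_{q+1}$ guarantees no pole at any $z \in \mu_{q+1}$.
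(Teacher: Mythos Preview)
The paper does not give its own proof of this proposition; it is quoted from Zieve's preprint \cite{Zieve2013} as a known result, so there is no in-paper argument to compare against.

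Your approach via the functional equation $\ell(x)\cdot\ell^{(q)}(1/x)=1$ is the standard one and is essentially how the result is proved. Two comments on the execution. First, the worry about small $q$ is unnecessary: $q+1\ge 3$ for every prime power $q$, and the polynomial identity you obtain has degree $2$, so agreement on $\mu_{q+1}$ already forces equality of polynomials. Second, your case split on $c=0$ versus $c\ne 0$ is what creates the $\beta x$ versus $\beta/x$ confusion. The clean split is on $a$: if $a=0$ the equations $ba^q=dc^q$ and $ab^q=cd^q$ force $d=0$ (since $b,c\ne 0$ by nondegeneracy), and then $bb^q=cc^q$ gives $\ell=\beta/x$ with $\beta=b/c\in\mu_{q+1}$. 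If $a\ne 0$, normalize $a=1$; then $b=dc^q$, and the middle equation becomes $(1-c^{q+1})(1-d^{q+1})=0$, while the determinant equals $d(1-c^{q+1})$. Nondegeneracy thus forces $d\ne 0$ and $c^{q+1}\ne 1$, hence $d^{q+1}=1$; setting $\beta=-d\in\mu_{q+1}$ and $\gamma=c\in\mathbb F_{q^2}\setminus\mu_{q+1}$ yields exactly $\ell=(x-\gamma^q\beta)/(\gamma x-\beta)$. The map $x\mapsto\beta x$ that appeared in your $c=0$ branch is already in this second family, with $\gamma=0$.
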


\begin{proposition}\label{bijinfty}
Let $\ell \in \overline{\mathbb{F}}_q(x)$ be a degree-one rational function. Then $\ell$ induces a bijection from $\mu_{q+1}$ to $\mathbb{F}_q \cup \{\infty $\} if and only if $\ell(x)=(\rho x+\rho^q)/(\epsilon x+\epsilon^q)$ for some $\rho, \epsilon \in \mathbb{F}_{q^2}^{\;*}$ satisfying $\rho^{q-1}\neq\epsilon^{q-1}$.
\end{proposition}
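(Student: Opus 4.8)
The plan is to characterize degree-one rational functions $\ell$ that map $\mu_{q+1}$ bijectively onto $\mathbb{F}_q\cup\{\infty\}$ by exploiting the standard description of $\mu_{q+1}$ and $\mathbb{F}_q\cup\{\infty\}$ as the sets of fixed points (on $\mathbb{P}^1(\overline{\mathbb{F}}_q)$) of two particular involutions in $\mathrm{PGL}_2$. Concretely, a point $x\in\mathbb{P}^1(\overline{\mathbb F}_q)$ lies in $\mathbb F_q\cup\{\infty\}$ exactly when $x^q=x$, and it lies in $\mu_{q+1}$ exactly when $x^q = 1/x$, i.e.\ when $x$ is fixed by the involution $\sigma\colon x\mapsto 1/x^q$ (a conjugate-linear map). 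Since every degree-one rational function is an element $\ell\in\mathrm{PGL}_2(\overline{\mathbb F}_q)$, and since a Möbius transformation that sends three or more points of $\mu_{q+1}$ into $\mathbb F_q\cup\{\infty\}$ is forced to have coefficients in $\mathbb F_{q^2}$ (three points with prescribed images determine $\ell$ uniquely, and both the three source points and their images can be taken Frobenius-stable as a set), I would first reduce to the case $\ell(x)=(ax+b)/(cx+d)$ with $a,b,c,d\in\mathbb F_{q^2}$.

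Next I would translate the bijectivity condition into an algebraic identity. If $\ell$ maps $\mu_{q+1}$ into $\mathbb F_q\cup\{\infty\}$, then for every $x$ with $x^{q+1}=1$ we need $\ell(x)^q=\ell(x)$. Writing $\ell^{(q)}$ for the rational function obtained by raising all coefficients to the $q$-th power and using $x^q=1/x$ on $\mu_{q+1}$, the condition $\ell(x)^q=\ell(x)$ becomes
\[
\frac{a^q x + b^q x}{\text{(denominator)}}\ \text{evaluated via } x^q=1/x \;=\; \ell(x),
\]
which after clearing denominators is a polynomial identity in $x$ of degree at most $2$ that must vanish at all $q+1$ points of $\mu_{q+1}$; for $q+1>2$ this forces the polynomial to be identically zero. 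Matching coefficients then yields relations of the form $a^q=\lambda c$, $b^q=\lambda d$ (equivalently $c^q = \mu a$, $d^q=\mu b$) for some common scalar, and these are exactly the relations saying that, up to the projective scalar, the numerator coefficient vector $(a,b)$ and the denominator coefficient vector $(c,d)$ are Frobenius-conjugate: $(c,d)=(\text{scalar})\cdot(a^q,b^q)$. Absorbing the scalar, this is precisely the assertion that $\ell(x)=(\rho x+\rho^q)/(\epsilon x+\epsilon^q)$ for suitable $\rho,\epsilon\in\mathbb F_{q^2}$; the requirement that $\ell$ actually be a degree-one rational function (invertible matrix) forces $\rho,\epsilon\neq 0$ and $\rho^{q-1}\neq\epsilon^{q-1}$, which is the nondegeneracy condition $\rho\epsilon^q-\rho^q\epsilon\neq 0$ on the determinant.

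For the converse I would check directly that any $\ell$ of the stated form sends $\mu_{q+1}$ into $\mathbb F_q\cup\{\infty\}$: for $x^{q+1}=1$,
\[
\ell(x)^q=\frac{\rho^q x^q+\rho}{\epsilon^q x^q+\epsilon}=\frac{\rho^q/x+\rho}{\epsilon^q/x+\epsilon}=\frac{\rho^q+\rho x}{\epsilon^q+\epsilon x}=\ell(x),
\]
so $\ell(x)\in\mathbb F_q\cup\{\infty\}$; and since $\ell$ is a bijection of $\mathbb P^1(\overline{\mathbb F}_q)$ with $|\mu_{q+1}|=q+1=|\mathbb F_q\cup\{\infty\}|$, injectivity on $\mu_{q+1}$ is automatic, so the image is all of $\mathbb F_q\cup\{\infty\}$. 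The main obstacle I anticipate is the rationality reduction at the start — justifying cleanly that the coefficients of $\ell$ may be taken in $\mathbb F_{q^2}$ rather than in $\overline{\mathbb F}_q$ — together with carefully handling the small cases $q\in\{2\}$ (where $q+1\le 3$ and the "polynomial vanishing at $q+1$ points is zero" step is tight) and the degenerate points $x=\infty$ or $\epsilon x+\epsilon^q=0$ where $\ell$ takes the value $\infty$; these require separate bookkeeping but no new ideas.
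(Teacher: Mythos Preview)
First, a note on comparison: the paper does not supply its own proof of this proposition; it is quoted from Zieve's paper, so there is no argument in the present paper to compare your attempt against.

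Your overall plan is the natural one, and your converse ($\Leftarrow$) computation is correct. The forward direction, however, has a concrete error at the coefficient-matching step. With $\ell(x)=(ax+b)/(cx+d)$, $a,b,c,d\in\mathbb F_{q^2}$, and $x^q=1/x$ on $\mu_{q+1}$, the identity $\ell(x)^q=\ell(x)$ clears to
\[
(b^{q} c - a d^{q})\,x^{2} \;+\; (a^{q} c + b^{q} d - a c^{q} - b d^{q})\,x \;+\; (a^{q} d - b c^{q}) \;=\; 0,
\]
so the three relations are $b^{q}c=ad^{q}$, $a^{q}d=bc^{q}$, and the middle one. These do \emph{not} read $a^{q}=\lambda c$, $b^{q}=\lambda d$; that would make the Frobenius of the numerator row proportional to the denominator row, and the resulting shape $\ell(x)=(ax+b)/(\mu a^{q}x+\mu b^{q})$ is neither what the equations give nor what you want (such $\ell$ need not send $\mu_{q+1}$ into $\mathbb F_q\cup\{\infty\}$ at all). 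The correct extraction is: from $a^{q}d=bc^{q}$ one gets $b/a^{q}=d/c^{q}=:\eta$; substituting $b=\eta a^{q}$, $d=\eta c^{q}$ into the middle relation yields $(1-\eta^{q+1})(a^{q}c-ac^{q})=0$, and $a^{q}c=ac^{q}$ would force $a/c\in\mathbb F_q$ and hence $ad-bc=0$, so in fact $\eta^{q+1}=1$. Since $\mu_{q+1}$ is exactly the image of $x\mapsto x^{q-1}$ on $\mathbb F_{q^2}^{\,*}$, write $\eta=\lambda^{q-1}$, set $\rho=\lambda a$, $\epsilon=\lambda c$, and then $\lambda b=\rho^{q}$, $\lambda d=\epsilon^{q}$, giving the stated form. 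So your scheme is salvageable, but the step as written identifies the wrong relations and draws the wrong normal form from them; this needs to be redone. (Your flagged side issues---the reduction to coefficients in $\mathbb F_{q^2}$ and the handling of the point where $\ell$ takes the value $\infty$---are genuine but routine once the main computation is fixed.)
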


We note that the rational functions $L/M$ that appear in Propositions~\ref{Zieve2013,Lemm A_2.1} and~\ref{bijinfty} satisfy the identity,
$$L^q=\beta x^{-1}M$$
for every  $x \in \mu_{q+1}$ and some $\beta \in \mathbb{F}_{q^2}$.
This observation is key to our work.  By considering polynomials $L$ and $M$ of any degree rather than one only, we
introduce the following.

\begin{definition}\label{Definition:associated}
Let $L, M \in \mathbb{F}_{q^2}[x]$  be distinct polynomials of the same degree, and let $\beta\in\mu_{q+1}$. When $L^{q}=\beta x^{-\deg(L)}M$ for every $x \in \mu_{q+1}$, we say that $L$ and $M$ are \emph{$\beta$-associated}  and write $L\sim_{\beta} M$.
When $L^{q}=\beta x^{-t}L$ for every $x\in \mu_{q+1}$ and some $t\in\mathbb N$, we say that $L$ is \emph{$(\beta,t)$-self-associated} and write $L\sim_{\beta,t} L$.
\end{definition}

One can see that associated polynomials satisfy some straightforward properties.

\begin{lemma}\label{assoc_prop}
\begin{enumerate}[{\normalfont(i)}]
\item If $L\sim_{\beta} M$ then $M\sim_{\beta} L$.
\item If $L\sim_{\beta} M$ and $M\sim_{\gamma}N$ then $LM \sim_{\beta\gamma}MN$.
\item If $L\sim_{\beta} M$ then $L^n \sim_{\beta^n} M^n$ for any positive integer $n$.
\item If $L$ is a self-reciprocal polynomial over $\mathbb F_q$ then $L\sim_{1,\deg L}L$.
\end{enumerate}
\end{lemma}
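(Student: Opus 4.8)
The plan is to verify each of the four claims directly from Definition~\ref{Definition:associated}, exploiting the single algebraic fact that underlies everything: for $x\in\mu_{q+1}$ we have $x^q = x^{-1}$, and hence for any polynomial $P\in\mathbb{F}_{q^2}[x]$ the value $P(x)^q$ equals $P^{(q)}(x^{-1})$, where $P^{(q)}$ is the polynomial obtained by raising the coefficients of $P$ to the $q$-th power. Throughout I would write $d=\deg L=\deg M$ and just manipulate the defining identities as equalities of functions on the finite set $\mu_{q+1}$, so no genuine geometry or counting is needed.

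For (i): assume $L\sim_\beta M$, so $L^q=\beta x^{-d}M$ for all $x\in\mu_{q+1}$. Raising both sides to the $q$-th power and using $\beta^q=\beta^{-1}$ (since $\beta\in\mu_{q+1}$) together with $(x^{-d})^q=x^{d}$ gives $L = \beta^{-1}x^{d}M^q$ on $\mu_{q+1}$, which rearranges to $M^q=\beta x^{-d}L$; since $\deg M=\deg L=d$, this is exactly $M\sim_\beta L$. For (ii): from $L^q=\beta x^{-\deg L}M$ and $M^q=\gamma x^{-\deg M}N$, multiply the two identities to obtain $(LM)^q=L^qM^q=\beta\gamma\,x^{-(\deg L+\deg M)}MN$ on $\mu_{q+1}$; since $\deg(LM)=\deg L+\deg M=\deg(MN)$ and $\beta\gamma\in\mu_{q+1}$, this reads $LM\sim_{\beta\gamma}MN$ (one should note $LM\ne MN$ because $L\ne M$ forces $LM\ne MN$ whenever $M\ne0$, which holds as $M$ has positive degree — or, if $\deg=0$ is allowed in some edge case, the statement is still read in the obvious way). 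For (iii): take $n$-th powers in $L^q=\beta x^{-\deg L}M$ to get $(L^n)^q=\beta^n x^{-n\deg L}M^n$, and since $\deg(L^n)=n\deg L$ and $\beta^n\in\mu_{q+1}$, this is $L^n\sim_{\beta^n}M^n$.

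For (iv): recall that $L$ self-reciprocal over $\mathbb{F}_q$ of degree $d$ means $L$ has coefficients in $\mathbb{F}_q$ and $x^{d}L(1/x)=L(x)$ identically. Then for $x\in\mu_{q+1}$, because the coefficients lie in $\mathbb{F}_q$ we have $L(x)^q=L(x^q)=L(x^{-1})$, and the self-reciprocity gives $L(x^{-1})=x^{-d}L(x)$; combining, $L^q = 1\cdot x^{-d}L$ on $\mu_{q+1}$, which is precisely $L\sim_{1,d}L$ with $t=d=\deg L$.

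I do not expect any real obstacle here; the whole lemma is bookkeeping with the identity $x^q=x^{-1}$ on $\mu_{q+1}$ and the interaction of the Frobenius with coefficients. The only points requiring a word of care are: making sure $\beta,\gamma,\beta\gamma,\beta^n$ all remain in $\mu_{q+1}$ (immediate, as $\mu_{q+1}$ is a group), checking the degree equalities that are part of the definition of ``$\sim$'' in (ii) and (iii), and correctly interpreting ``$L(x)^q$'' as ``apply Frobenius to coefficients, then substitute $x^q$'' in (iv) so that the hypothesis $L\in\mathbb{F}_q[x]$ can be used to drop the coefficient-Frobenius. If one wanted to be maximally careful one could also remark in (i) why $M\ne L$ is preserved, but that is part of the hypothesis being symmetric and needs no argument.
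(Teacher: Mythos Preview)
Your proof is correct and is exactly the natural direct verification from Definition~\ref{Definition:associated}; the paper itself gives no proof at all, prefacing the lemma only with ``One can see that associated polynomials satisfy some straightforward properties.'' One small slip worth flagging: in (ii) you justify $LM\ne MN$ by saying ``$L\ne M$ forces $LM\ne MN$ whenever $M\ne 0$,'' but the cancellation you need is $L\ne N\Rightarrow LM\ne MN$, and nothing in the hypotheses rules out $L=N$ (similarly $L^n=M^n$ is not excluded in (iii)); this is a wrinkle in how the paper builds distinctness into Definition~\ref{Definition:associated} rather than a defect in your argument, and the paper does not address it either.
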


In the next sections we will exhibit several examples of associated polynomials of degree greater than one. Before we present them, we will set one more notation.  Suppose that $L, M, N \in \mathbb{F}_{q^2}[x]$ with $L$ and $M$ of the same degree. We define the polynomial
$$N \bullet  L / M:=M^{\deg(N)}N\left(L/M\right)
$$
whose degree is $\deg(M)\cdot\deg(N)$.

\begin{proposition}\label{prop:composition}
If $L\sim_{\beta} M$ and $\widetilde L\sim_{\gamma} \widetilde M$ then
$L \bullet \widetilde L /\widetilde M \sim_{\beta \gamma^{\deg(L)}}M\bullet \widetilde L /\widetilde M.$
\end{proposition}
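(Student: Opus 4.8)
The plan is to reduce the statement to a single pointwise identity on $\mu_{q+1}$ and then verify that identity by a direct substitution. Write $n=\deg L=\deg M$ and $m=\deg\widetilde L=\deg\widetilde M$, and set
\[
P:=L\bullet\widetilde L/\widetilde M=\widetilde M^{\,n}\,L(\widetilde L/\widetilde M),\qquad
Q:=M\bullet\widetilde L/\widetilde M=\widetilde M^{\,n}\,M(\widetilde L/\widetilde M),
\]
both polynomials in $\mathbb F_{q^2}[x]$ of degree $mn$. Since $\beta,\gamma\in\mu_{q+1}$ we have $\beta\gamma^{n}\in\mu_{q+1}$, and $P\neq Q$ because $L\neq M$ and $\widetilde L/\widetilde M$ is a nonconstant rational function, so $(L-M)(\widetilde L/\widetilde M)\neq 0$. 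Hence, by Definition~\ref{Definition:associated}, it suffices to establish
\[
P(x)^{q}=\beta\gamma^{n}\,x^{-mn}\,Q(x)\qquad\text{for every }x\in\mu_{q+1}.
\]

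The key preliminary step is to observe that $\widetilde L/\widetilde M$ maps $\mu_{q+1}$ into $\mu_{q+1}$. Fix $x\in\mu_{q+1}$ and put $u:=\widetilde L(x)/\widetilde M(x)$ (assuming, as one may after first reducing the fraction, that $\gcd(\widetilde L,\widetilde M)=1$, so that $\widetilde M(x)\neq 0$). From $\widetilde L\sim_{\gamma}\widetilde M$ and its symmetric form $\widetilde M\sim_{\gamma}\widetilde L$, which holds by Lemma~\ref{assoc_prop}(i), we get $\widetilde L(x)^{q}=\gamma x^{-m}\widetilde M(x)$ and $\widetilde M(x)^{q}=\gamma x^{-m}\widetilde L(x)$; dividing one by the other yields $u^{q}=\widetilde M(x)/\widetilde L(x)=u^{-1}$, so $u\in\mu_{q+1}$. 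This is precisely what allows the defining relation of $L\sim_{\beta}M$ to be evaluated at $u$.

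The rest is a mechanical computation. Starting from $P(x)=\widetilde M(x)^{n}L(u)$, raise to the $q$-th power, then use $L(u)^{q}=\beta\,u^{-n}M(u)$ (valid since $u\in\mu_{q+1}$), the identity $\widetilde M(x)^{q}=\gamma x^{-m}\widetilde L(x)$, and $u^{-n}=(\widetilde M(x)/\widetilde L(x))^{n}$:
\[
P(x)^{q}=\bigl(\widetilde M(x)^{q}\bigr)^{n}L(u)^{q}
=\beta\bigl(\gamma x^{-m}\widetilde L(x)\bigr)^{n}\Bigl(\tfrac{\widetilde M(x)}{\widetilde L(x)}\Bigr)^{n}M(u)
=\beta\gamma^{n}x^{-mn}\,\widetilde M(x)^{n}M(u).
\]
Since $\widetilde M(x)^{n}M(u)=\widetilde M(x)^{n}M(\widetilde L(x)/\widetilde M(x))=Q(x)$, this is exactly the desired identity, and therefore $P\sim_{\beta\gamma^{n}}Q$, that is, $L\bullet\widetilde L/\widetilde M\sim_{\beta\gamma^{\deg L}}M\bullet\widetilde L/\widetilde M$.

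I expect the only genuine subtlety to be the treatment of the finitely many points of $\mu_{q+1}$ at which $\widetilde M$ — equivalently, by the argument above, $\widetilde L$ — vanishes, where $u$ is not defined; normalizing so that $\gcd(\widetilde L,\widetilde M)=1$ removes these, and otherwise one clears denominators and checks the resulting polynomial identity. Everything past the verification that $u\in\mu_{q+1}$ is purely formal, so that verification, which rests on Lemma~\ref{assoc_prop}(i) together with the hypothesis $\widetilde L\sim_{\gamma}\widetilde M$, is the real content of the argument.
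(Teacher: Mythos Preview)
Your argument is correct and arrives at the same identity as the paper, but the route is organized differently. The paper writes $L=\sum a_i x^i$, expands $(L\bullet\widetilde L/\widetilde M)^q=\sum a_i^q(\widetilde L^q)^i(\widetilde M^q)^{n-i}$, substitutes $\widetilde L^q=\gamma x^{-m}\widetilde M$ and $\widetilde M^q=\gamma x^{-m}\widetilde L$, and then recognizes $\sum a_i^q\widetilde L^{n-i}\widetilde M^i$ as $\beta\,\widetilde M^n M(\widetilde L/\widetilde M)$ via the coefficient description of $M$ coming from $L\sim_\beta M$. Your version replaces this coefficient bookkeeping with the single conceptual observation that $u=\widetilde L(x)/\widetilde M(x)\in\mu_{q+1}$ whenever $x\in\mu_{q+1}$, which lets you invoke $L(u)^q=\beta u^{-n}M(u)$ directly. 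The paper's approach has the advantage that it never divides by $\widetilde M$, so the identity is established as a polynomial identity valid at every $x\in\mu_{q+1}$ with no exceptional points; your approach is more transparent about \emph{why} the composition behaves well, at the cost of having to say something about the points where $\widetilde M$ vanishes. On that last point, your worry is easily dispatched without any $\gcd$ normalization: if $x\in\mu_{q+1}$ and $\widetilde M(x)=0$, then $\widetilde L(x)=0$ as well (by the same reasoning you used), so every term $a_i\widetilde L^i\widetilde M^{n-i}$ of $P$ and every term of $Q$ vanishes at $x$, and the identity $P(x)^q=\beta\gamma^n x^{-mn}Q(x)$ holds trivially there.
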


\begin{proof}
Let $L= \displaystyle\sum_{i=0}^{\deg(L)} a_i x^i$. Then
\begin{equation}\label{eq1}
\left(L \bullet \widetilde L /\widetilde M \right)^q=\left(\widetilde M^{\deg(L)}L\left(\widetilde L/\widetilde M \right)\right)^q=\sum_{i=0}^{\deg(L)} a_i^q ({\widetilde L}^q)^i ({\widetilde M^q})^{\deg (L)-i}.
\end{equation}
We now assume that  $x \in \mu_{q+1}$. Since $L\sim_{\beta} M$, we have that $M= \displaystyle\beta^{-1}\sum_{i=0}^{\deg(L)} a_i^q x^{\deg(L) -i}$,
and so
\begin{equation}\label{eq2}
M\left(\widetilde L/\widetilde M \right)= \displaystyle\beta^{-1}\sum_{i=0}^{\deg(L)} a_i^q \left( \widetilde L/\widetilde M \right)^{\deg(L) -i}
\quad\Longrightarrow\quad  \beta\widetilde M^{\deg(L)}M\left(\widetilde L/\widetilde M \right)=  \displaystyle \sum_{i=0}^{\deg(L)} a_i^q \widetilde L^{\deg(L) -i}\widetilde M^i.
\end{equation}
By substituting
$$\widetilde L^q = \gamma x^{-\deg (\widetilde L)}\widetilde M \quad\text{ and }\quad
\widetilde M^q= \gamma x^{-\deg(\widetilde L)}\widetilde L$$
into (\ref{eq1}) and then using (\ref{eq2}), it follows that
\begin{eqnarray*}
\left(L \bullet \widetilde L /\widetilde M\right)^q &=& \gamma^{\deg(L)}x^{-\deg(L)\deg(\widetilde L)}\sum_{i=0}^{\deg(L)} a_i^q  \widetilde L^{\deg (L)-i}
\widetilde M^i \\
&= &\beta\gamma^{\deg(L)} x^{-\deg(L)\deg(\widetilde L)}\widetilde M^{\deg(L)}M\left( \widetilde L/\widetilde M \right)\\
&= &\beta\gamma^{\deg(L)} x^{-\deg(L)\deg(\widetilde M)}M\bullet \widetilde L /\widetilde M.
\end{eqnarray*}
\end{proof}

\section{Permutation polynomials from bijections on $\mu_{q+1}$}\label{Sec3}

We start this section by extending Theorem~\ref{Zieve2013Theorem1.1}.

\begin{theorem}\label{prop:good}
Let $L,M \in \mathbb{F}_{q^2}[x]$ and $\beta\in \mu_{q+1}$ be such that  $L\sim_{\beta} M$. Let $k\geq 0$ be an integer. Then $x^{\deg(L)+k(q+1)}M(x^{q-1})$ permutes $\mathbb{F}_{q^2}$ if and only if $(\deg(L)+2k,q-1)=1$ and $L/M$ permutes $\mu_{q+1}$.
\end{theorem}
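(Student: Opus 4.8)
The plan is to recognize $x^{\deg(L)+k(q+1)}M(x^{q-1})$ as an instance of the Tucker--Zieve shape $x^{r}h(x^{d})$ over the field $\mathbb{F}_{q^2}$, invoke Theorem~\ref{ppcriterion}, and then use the $\beta$-association identity to see that the resulting auxiliary map on $\mu_{q+1}$ is nothing but a scalar multiple of $L/M$. Concretely, I would apply Theorem~\ref{ppcriterion} with ground field $\mathbb{F}_{q^2}$, with $d=q-1$ (which divides $q^{2}-1$), with $h=M$, and with $r=\deg(L)+k(q+1)$. Since $(q^{2}-1)/d=q+1$, the criterion says that $x^{\deg(L)+k(q+1)}M(x^{q-1})$ permutes $\mathbb{F}_{q^2}$ if and only if $(r,q-1)=1$ and $x^{r}M(x)^{q-1}$ permutes $\mu_{q+1}$. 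The gcd condition simplifies immediately: since $q+1\equiv 2\pmod{q-1}$ we have $r\equiv\deg(L)+2k\pmod{q-1}$, so $(r,q-1)=(\deg(L)+2k,q-1)$, which is the first stated condition.

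It then remains to identify the auxiliary permutation condition on $\mu_{q+1}$. For $x\in\mu_{q+1}$ one has $x^{k(q+1)}=1$, so $x^{r}M(x)^{q-1}=x^{\deg(L)}M(x)^{q-1}$ there. By Lemma~\ref{assoc_prop}(i), $L\sim_{\beta}M$ yields $M\sim_{\beta}L$, that is, $M(x)^{q}=\beta x^{-\deg(L)}L(x)$ for every $x\in\mu_{q+1}$ (using $\deg(M)=\deg(L)$). Hence, assuming $M(x)\neq 0$,
$$x^{\deg(L)}M(x)^{q-1}=x^{\deg(L)}\frac{M(x)^{q}}{M(x)}=\beta\,\frac{L(x)}{M(x)}.$$
Because multiplication by $\beta\in\mu_{q+1}$ is a bijection of the group $\mu_{q+1}$, the map $x\mapsto x^{\deg(L)}M(x)^{q-1}$ permutes $\mu_{q+1}$ if and only if $L/M$ does, which is the second stated condition; this closes the equivalence.

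The only point that needs care is the degenerate case in which $M$ vanishes at some $x_{0}\in\mu_{q+1}$; note that then, by $M\sim_{\beta}L$, $L$ vanishes there as well. In that case $x^{r}M(x)^{q-1}$ sends $x_{0}$ to $0\notin\mu_{q+1}$, so it is not even a self-map of $\mu_{q+1}$ and cannot permute it, while $L/M$ simultaneously fails to be a well-defined bijection $\mu_{q+1}\to\mu_{q+1}$; thus both conditions fail and the equivalence still holds vacuously. I would therefore dispose of this case at the outset and afterwards work under the assumption that $M$ has no zeros on $\mu_{q+1}$, which also guarantees that $x\mapsto x^{\deg(L)}M(x)^{q-1}$ does take its values in $\mu_{q+1}$, since $\bigl(x^{\deg(L)}M(x)^{q-1}\bigr)^{q+1}=x^{\deg(L)(q+1)}M(x)^{q^{2}-1}=1$. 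Beyond this bookkeeping I do not anticipate a real obstacle: the substance of the proof is the one-line identity displayed above, which converts the criterion's auxiliary map directly into $L/M$.
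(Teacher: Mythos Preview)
Your proof is correct and follows essentially the same approach as the paper's: apply Theorem~\ref{ppcriterion} with $d=q-1$, simplify the gcd condition via $q+1\equiv 2\pmod{q-1}$, and use the association identity $M^{q}=\beta x^{-\deg(L)}L$ on $\mu_{q+1}$ to rewrite $x^{\deg(L)}M^{q-1}$ as $\beta L/M$. Your explicit treatment of the degenerate case where $M$ vanishes on $\mu_{q+1}$ is more careful than the paper's terse argument, but the core idea is identical.
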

\proof
We apply Theorem~\ref{ppcriterion}. First, we notice that the conditions $(\deg(L)+k(q+1),q-1)=1$ and $(\deg(L)+2k,q-1)=1$ are equivalent.
For $x \in \mu_{q+1}$, we obtain that
$$x^{\deg(L)+k(q+1)}M^{q-1} = x^{\deg(L)}M^q/M=\beta x^{\deg(L)}x^{-\deg(L)}L/M=\beta L/M.$$ Since $\beta \in \mu_{q+1}$, the claim follows.
\endproof

A similar criterion involving self-associated polynomials is the following.

\begin{theorem}\label{var1}
Let $L \in \mathbb{F}_{q^2}[x]$, $\beta\in \mu_{q+1}$ and $t\in\mathbb N$ be such that  $L\sim_{\beta,t} L$. Let $k,s\geq 0$ be integers. Then $x^{s+k(q+1)}L(x^{q-1})$ permutes $\mathbb{F}_{q^2}$ if and only if $(s-t,q+1)=1$, $(s+k(q+1),q-1)=1$, and $L$ has no roots in $\mu_{q+1}$.
\end{theorem}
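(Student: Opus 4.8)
The plan is to apply Theorem~\ref{ppcriterion} with $d=q-1$ and $r=s+k(q+1)$, so that $x^{s+k(q+1)}L(x^{q-1})$ permutes $\mathbb{F}_{q^2}$ if and only if $(r,q-1)=1$ and $x^r L(x)^{q-1}$ permutes $\mu_{q+1}$. The condition $(r,q-1)=1$ is exactly $(s+k(q+1),q-1)=1$, which is one of the three hypotheses, so the entire content is to show that, under $(s+k(q+1),q-1)=1$, the map $G(x):=x^{s+k(q+1)}L(x)^{q-1}$ permutes $\mu_{q+1}$ if and only if $(s-t,q+1)=1$ and $L$ has no roots in $\mu_{q+1}$.

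First I would reduce $G$ on $\mu_{q+1}$ to a monomial. For $x\in\mu_{q+1}$ we have $x^{q+1}=1$, so $x^{k(q+1)}=1$ and $G(x)=x^s L(x)^{q-1}=x^s L(x)^q/L(x)$, provided $L(x)\neq 0$; note that if $L$ has a root $a\in\mu_{q+1}$ then $G$ is not even well-defined as a function to $\mu_{q+1}$ there (the natural value is $0\notin\mu_{q+1}$, or the polynomial $x^r L(x)^{q-1}$ vanishes at $a$, which cannot equal any element of $\mu_{q+1}$), so $G$ cannot permute $\mu_{q+1}$; this handles the necessity of ``$L$ has no roots in $\mu_{q+1}$'' and lets us assume $L(a)\neq 0$ for all $a\in\mu_{q+1}$ in what follows. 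Now invoke the self-associated hypothesis $L\sim_{\beta,t}L$, i.e. $L(x)^q=\beta x^{-t}L(x)$ for every $x\in\mu_{q+1}$; substituting gives, for $x\in\mu_{q+1}$,
\[
G(x)=x^s\cdot\frac{\beta x^{-t}L(x)}{L(x)}=\beta\, x^{s-t}.
\]
Since $\beta\in\mu_{q+1}$, the map $x\mapsto\beta x^{s-t}$ sends $\mu_{q+1}$ to $\mu_{q+1}$, and it is a bijection of $\mu_{q+1}$ (a cyclic group of order $q+1$) if and only if the exponent $s-t$ is coprime to $q+1$. This yields both the equivalence $G$ permutes $\mu_{q+1}$ $\iff$ $(s-t,q+1)=1$ and, combined with the first paragraph, the full statement.

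The only subtlety I would be careful about is the bookkeeping when $s-t$ could be negative or when $s=t$: since we are working multiplicatively in the finite cyclic group $\mu_{q+1}$, the exponent $s-t$ should be read modulo $q+1$, and $(s-t,q+1)=1$ is understood in that sense (equivalently $\gcd(s-t+(q+1)N,q+1)=1$ for some/any large $N$); if $s\equiv t\pmod{q+1}$ then the map is constant $\equiv\beta$ and indeed fails to permute $\mu_{q+1}$ whenever $q+1>1$, consistent with $(0,q+1)=q+1\neq 1$. I would also remark that Theorem~\ref{ppcriterion} is stated for $x^r h(x^d)$ with $h\in\mathbb{F}_q[x]$, whereas here $L\in\mathbb{F}_{q^2}[x]$; this is fine because the ambient field in the application is $\mathbb{F}_{q^2}$, so we use the criterion with $q$ replaced by $q^2$, $d=q-1\mid q^2-1$, and $(q^2-1)/d=q+1$, and the coefficient field $\mathbb{F}_{q^2}$ plays the role of the ``$\mathbb{F}_q$'' in the statement of Theorem~\ref{ppcriterion}. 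No genuine obstacle arises: once the reduction $G(x)=\beta x^{s-t}$ on $\mu_{q+1}$ is in place, everything is immediate, and the ``main obstacle'' is merely stating the root-freeness necessity cleanly.
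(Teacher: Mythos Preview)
Your proof is correct and follows exactly the same approach as the paper: apply Theorem~\ref{ppcriterion} over $\mathbb{F}_{q^2}$ with $d=q-1$, then use $L\sim_{\beta,t}L$ to reduce $x^{s+k(q+1)}L(x)^{q-1}$ on $\mu_{q+1}$ to $\beta x^{s-t}$. Your write-up is in fact more thorough than the paper's, which does not explicitly spell out the necessity of the root-freeness condition or the bookkeeping around the exponent $s-t$.
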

\proof
We apply Theorem~\ref{ppcriterion}. For $x \in \mu_{q+1}$, we obtain that
$$x^{s+k(q+1)}L^{q-1} = x^{s}L^q/L=\beta x^{s-t}.$$
Since $\beta \in \mu_{q+1}$ and $(s-t,q+1)=1$, the claim follows.
\endproof

Now we will show how we can derive Theorem~\ref{Zieve2013Theorem1.1} from Theorem~\ref{prop:good}.

\bigskip

\textit{Proof of Theorem~\ref{Zieve2013Theorem1.1}.}
Let $\beta\in \mu_{q+1}$ and $\gamma \in \mathbb{F}_{q^2}^*\setminus \mu_{q+1}$. Suppose that $(n+2k,q-1) = 1$ and $(n,q+1)=1$. Consider $L(x)=x-\gamma^q\beta$  and $M(x)=\gamma x-\beta$. Clearly $L \sim_{-1/\beta}M$. By Lemma~\ref{assoc_prop},
it follows that $M^n\sim_{(-1/\beta)^n}L^n$. By Proposition~\ref{Zieve2013,Lemm A_2.1},  the rational function $L/M$ permutes $\mu_{q+1}$, and so does $M^n/L^n$ since $(n,q+1)=1$. On the other hand, if $\ell(x)=\gamma x-\gamma^{1-q}$ and $\widetilde\ell(x)=x-\gamma$ then $\ell \sim_{-\gamma^{q-1}}\widetilde\ell $. By Proposition~\ref{prop:composition}, we have that $\ell \bullet M^n/L^n\sim_{-\gamma^{q-1}(-1/\beta)^n}\widetilde\ell\bullet M^n/L^n$.  Moreover, Proposition~\ref{Zieve2013,Lemm A_2.1} implies that  $\ell /\widetilde\ell$ permutes $\mu_{q+1}$, and so does the rational function $$\dfrac{\ell \bullet M^n/L^n}{\widetilde\ell\bullet M^n/L^n} = \dfrac{\ell\left(M^n/L^n\right)}{\widetilde\ell\left(M^n/L^n\right)}.$$
Therefore, by Theorem~\ref{prop:good}, the polynomial
\begin{eqnarray*}
x^{\deg(\widetilde\ell \bullet M^n/L^n)+k(q+1)}(\widetilde\ell \bullet M^n/L^n)(x^{q-1}) &=& x^{n+k(q+1)}  \left(L^n\cdot\widetilde\ell\left(\dfrac{M^n}{L^n}\right)\right) (x^{q-1}) \\
&=& x^{n+k(q+1)} \left(M^n-\gamma L^n\right)(x^{q-1})\\
&=& x^{n+k(q+1)} ((\gamma x^{q-1}-\beta)^n-\gamma(x^{q-1}-\gamma^q\beta)^n)
\end{eqnarray*}
permutes $\mathbb F_{q^2}$. Conversely, if $x^{n+k(q+1)} ((\gamma x^{q-1}-\beta)^n)-\gamma(x^{q-1}-\gamma^q\beta)^n)$ permutes $\mathbb F_{q^2}$ then by Theorem~\ref{prop:good}  we obtain that $(n+2k,q-1) = 1$ and
$\ell\left(M^n/L^n\right)\Big /\widetilde\ell\left(M^n/L^n\right)$
permutes $\mu_{q+1}$.  Since $\ell/\widetilde\ell$ and $M/L$ permute $\mu_{q+1}$, the latter condition is true if and only if $(n,q+1)=1$.  \hfill$\square$

\bigskip

In the proof above, we use Zieve's characterization of degree-one rational functions permuting $\mu_{q+1}$.  For rational functions of higher degree satisfying the same property, an analogous result follows, providing another way of constructing permutation polynomials.

\begin{theorem}\label{var}
Let $L,M \in \mathbb{F}_{q^2}[x]$ and $\beta\in \mu_{q+1}$ be such that  $L\sim_{\beta} M$. Let $k\geq 0$ be an integer  and $\gamma \in \mathbb{F}_{q^2} \setminus \mu_{q+1}$.  Then
$$x^{n\deg(L)+k(q+1)}(L^n-\gamma M^{n})(x^{q-1})$$
permutes $\mathbb{F}_{q^2}$ if and only if $(\deg(L)+2k,q-1)=1$, $(n,q+1)=1$ and $L/M$ permutes $\mu_{q+1}$.
\end{theorem}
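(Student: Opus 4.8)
The plan is to deduce Theorem~\ref{var} from Theorem~\ref{prop:good} by exhibiting a suitable pair of $\beta'$-associated polynomials of degree $n\deg(L)$ whose quotient permutes $\mu_{q+1}$. First I would set $P=L^n$ and $Q=M^n$; by Lemma~\ref{assoc_prop}(iii) we have $P\sim_{\beta^n}Q$, and since $(n,q+1)=1$ the map $x\mapsto x^n$ permutes $\mu_{q+1}$, so $L/M$ permutes $\mu_{q+1}$ if and only if $P/Q=(L/M)^n$ does. Next I would introduce the degree-one rational function $\ell/\widetilde\ell$ designed to realize the shift by $-\gamma$: take $\widetilde\ell(x)=x-\gamma$ and choose $\ell$ of the form in Proposition~\ref{Zieve2013,Lemm A_2.1} so that $\ell\sim_{\beta_0}\widetilde\ell$ for an explicit $\beta_0\in\mu_{q+1}$ and $\ell/\widetilde\ell$ permutes $\mu_{q+1}$ (this is exactly the device used in the proof of Theorem~\ref{Zieve2013Theorem1.1}, with $\ell(x)=\gamma x-\gamma^{1-q}$, so that $\ell\sim_{-\gamma^{q-1}}\widetilde\ell$). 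Applying Proposition~\ref{prop:composition} to $P\sim_{\beta^n}Q$ and $\ell\sim_{\beta_0}\widetilde\ell$ gives
$$\ell\bullet P/Q\ \sim_{\beta^n\beta_0^{\deg(\ell)}}\ \widetilde\ell\bullet P/Q,$$
and since both $P/Q$ and $\ell/\widetilde\ell$ permute $\mu_{q+1}$, so does their composition $(\ell\bullet P/Q)/(\widetilde\ell\bullet P/Q)$.

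Then I would identify $\widetilde\ell\bullet P/Q = Q\cdot\widetilde\ell(P/Q)=P-\gamma Q=L^n-\gamma M^n$, a polynomial of degree $n\deg(L)$ (using $\deg P=\deg Q$), with the matching associated polynomial $\ell\bullet P/Q=\gamma P-\gamma^{1-q}Q$. Feeding the pair $(\widetilde\ell\bullet P/Q,\ \ell\bullet P/Q)$ — more precisely applying Theorem~\ref{prop:good} to the $\beta'$-associated pair just produced, with $\deg=n\deg(L)$ — yields that
$$x^{n\deg(L)+k(q+1)}\bigl(\widetilde\ell\bullet P/Q\bigr)(x^{q-1})=x^{n\deg(L)+k(q+1)}(L^n-\gamma M^n)(x^{q-1})$$
permutes $\mathbb F_{q^2}$ if and only if $(n\deg(L)+2k,q-1)=1$ and $(\ell\bullet P/Q)/(\widetilde\ell\bullet P/Q)$ permutes $\mu_{q+1}$. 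Here I should double-check the gcd bookkeeping: Theorem~\ref{prop:good} with a degree-$n\deg(L)$ pair asks for $(n\deg(L)+2k,q-1)=1$, and I would need to reconcile this with the stated hypothesis $(\deg(L)+2k,q-1)=1$ together with $(n,q+1)=1$ — one checks (arguing prime by prime, or noting $n$ is a product of prime factors of $q+1$ hence coprime to $q-1$ except possibly the prime $2$, and treating the exponent parity via $n\deg(L)+2k\equiv \deg(L)+2k$ appropriately) that under $(n,q+1)=1$ the condition $(n\deg(L)+2k,q-1)=1$ is equivalent to $(\deg(L)+2k,q-1)=1$. This equivalence, analogous to the opening remark in the proof of Theorem~\ref{prop:good} about $(\deg(L)+k(q+1),q-1)=(\deg(L)+2k,q-1)$, is the one genuinely fiddly point.

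For the "if and only if" on the $\mu_{q+1}$ side, I would argue as in the proof of Theorem~\ref{Zieve2013Theorem1.1}: since $\ell/\widetilde\ell$ permutes $\mu_{q+1}$ unconditionally, the composition $(\ell\bullet P/Q)/(\widetilde\ell\bullet P/Q)=(\ell/\widetilde\ell)\circ(P/Q)$ permutes $\mu_{q+1}$ if and only if $P/Q=(L/M)^n$ permutes $\mu_{q+1}$, and — because $(n,q+1)=1$ makes $x\mapsto x^n$ a bijection of $\mu_{q+1}$ — this holds if and only if $L/M$ permutes $\mu_{q+1}$. Assembling these equivalences with the gcd reconciliation gives the claimed characterization. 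The main obstacle I anticipate is precisely the gcd reconciliation between the exponents $n\deg(L)+2k$ and $\deg(L)+2k$ modulo $q-1$ under the hypothesis $(n,q+1)=1$: one must be careful that the only common prime that $n$ could share with $q-1$ is $2$, and that the $2$-adic contribution to the exponent is unaffected because $n$ odd in that case; handling $q$ even separately (where $q-1$ is odd and $2\nmid n$ issues disappear) cleans this up. Everything else is a direct assembly of Proposition~\ref{prop:composition}, Lemma~\ref{assoc_prop}(iii), Proposition~\ref{Zieve2013,Lemm A_2.1}, and Theorem~\ref{prop:good}.
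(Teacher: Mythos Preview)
Your approach is exactly the paper's: the paper simply says ``This result can be proved by replicating our proof of Theorem~\ref{Zieve2013Theorem1.1} without specifying $L$ and $M$,'' and that is precisely what you do --- pass to $L^n\sim_{\beta^n}M^n$ via Lemma~\ref{assoc_prop}(iii), compose with the degree-one pair $\ell(x)=\gamma x-\gamma^{1-q}$, $\widetilde\ell(x)=x-\gamma$ via Proposition~\ref{prop:composition}, and then invoke Theorem~\ref{prop:good}. The converse unpacks the same way, using that on the finite set $\mu_{q+1}$ a composition of self-maps is bijective iff each factor is (so $(L/M)^n$ permuting $\mu_{q+1}$ forces both $(n,q+1)=1$ and $L/M$ permuting).

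The one genuine gap is your gcd reconciliation, and here your argument is wrong. You claim that under $(n,q+1)=1$ one has $(n\deg(L)+2k,q-1)=1\Leftrightarrow(\deg(L)+2k,q-1)=1$, justifying this by saying ``$n$ is a product of prime factors of $q+1$ hence coprime to $q-1$ except possibly the prime $2$.'' That sentence is backwards --- $(n,q+1)=1$ means $n$ \emph{avoids} the primes of $q+1$, not that it is built from them --- and the claimed equivalence is simply false. Take $q=7$, $\deg(L)=1$, $k=0$, $n=3$: then $(n,q+1)=(3,8)=1$ and $(\deg(L)+2k,q-1)=(1,6)=1$, but $(n\deg(L)+2k,q-1)=(3,6)=3$. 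This is exactly the setting of Theorem~\ref{Zieve2013Theorem1.1}, which requires $(n+2k,q-1)=1$ and correctly predicts the polynomial does \emph{not} permute $\mathbb{F}_{49}$.

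What your proof actually yields, via Theorem~\ref{prop:good} applied to a pair of degree $n\deg(L)$, is the condition $(n\deg(L)+2k,q-1)=1$. That is the correct condition; the printed hypothesis $(\deg(L)+2k,q-1)=1$ is a typo in the paper (note it is inconsistent with Theorem~\ref{Zieve2013Theorem1.1}, the $\deg(L)=1$ case, which has $(n+2k,q-1)=1$). So rather than trying to reconcile the two, you should state the theorem with $(n\deg(L)+2k,q-1)=1$ and your proof then goes through cleanly.
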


This result can be proved by replicating our proof of Theorem~\ref{Zieve2013Theorem1.1} without specifying $L$ and $M$.

Throughout the remaining of this section, we will mention the polynomials satisfying our permuting conditions  several times.  To ease our reference, we call them \emph{good polynomials}.

\begin{definition}\label{Definition:good}
Let  $L, M \in \mathbb{F}_{q^2}[x]$ and $\beta\in \mu_{q+1}$,  and let $k,s\geq 0$ and $t>0$ be integers. We say that the pair $(L,M)$ is \emph{$(\beta,k)$-good}   if $L\sim_{\beta} M$, $(\deg(L)+2k,q-1)=1$ and $L/M$ permutes $\mu_{q+1}$.  
We say that  $L$ is \emph{$(\beta,t,k,s)$-good}   if $L\sim_{\beta,t} L$, $(s-t,q+1)=1$, $(s+k(q+1),q-1)=1$,  and $L$ has no roots in $\mu_{q+1}$.
\end{definition}

Because the conditions $L\sim_\beta M$ and $M\sim_\beta L$ are equivalent, we can reverse the roles of $L$ and $M$ in the polynomials that appear in Theorems~\ref{prop:good} and~\ref{var}.  This means that, whenever we have a pair $(L,M)$ of $(\beta,k)$-good polynomials, we assume that the following four classes of permutation polynomials over $\mathbb F_{q^2}$ are automatically
produced:
\begin{itemize}
\item $x^{\deg(L)+k(q+1)}L(x^{q-1})$
\item $x^{\deg(L)+k(q+1)}M(x^{q-1})$
\item $x^{n\deg(L)+k(q+1)}(L^n-\gamma M^{n})(x^{q-1})$
\item $x^{n\deg(L)+k(q+1)}(M^n-\gamma L^{n})(x^{q-1})$
\end {itemize}
where  $n\in\{n\in\mathbb{N}\mid (n,q+1)=1\}$ and $\gamma\in\mathbb{F}_{q^2} \setminus \mu_{q+1}$.
Similarly, whenever we have a polynomial $L$ that is $(\beta,t,k,s)$-good, we assume that the following class of permutation polynomials
over $\mathbb F_{q^2}$ is produced:
\begin{itemize}
\item $x^{s+k(q+1)}L(x^{q-1}).$
\end{itemize}

Proposition~\ref{Zieve2013,Lemm A_2.1} provides a complete description of all good polynomials of degree one, namely
$(x-\gamma^q\beta,\gamma x-\beta)$ is $(-\beta^{-1},k)$-good whenever $(2k+1,q-1)=1$, $\beta\in\mu_{q+1}$ and $\gamma\in\mathbb{F}_{q^2}^*\setminus\mu_{q+1}$.  The fourth class of permutation polynomials associated to this pair is the one  obtained by Zieve in Theorem~\ref{Zieve2013Theorem1.1}.

We can also precisely describe the pairs of good polynomials of degree two.



\begin{proposition}\label{prop:grado2}
Let $L$ and $M$ be polynomials of degree two over $\mathbb F_{q^2}$. Then
$(L,M)$ is $(\beta,k)$-good if and only if $q$ is even, $(k+1,q-1)=1$,
$$L(x)= (C_1+(i+1)C_2)x^2+A_1+(i+1)A_2 \quad\text{ and }\quad
M(x) =(A_1+iA_2)x^2+C_1+iC_2
$$
where each $A_j,C_j\in \mathbb{F}_q$, $\xi \in  \mathbb{F}_{q}$ and $i\in \mathbb{F}_{q^2}$ are such that $Tr_{\mathbb{F}_{q}|\mathbb{F}_2}(\xi)=1$, $i^2=i+\xi$,
$$
(A_1+iA_2)(C_1+iC_2)\neq 0, \qquad \text{and} \qquad
A_1^2 + A_1 A_2 + C_1^2 + C_1 C_2 + \xi ( A_2^2+ C_2^2) \neq 0.
$$
\end{proposition}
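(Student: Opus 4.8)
The plan is to translate the definition of "$(\beta,k)$-good" into explicit polynomial identities and then solve the resulting system over $\mathbb{F}_{q^2}$. Write $L(x) = a_2 x^2 + a_1 x + a_0$ and $M(x) = b_2 x^2 + b_1 x + b_0$ with $a_2, b_2 \neq 0$. The condition $L \sim_\beta M$ means $L(x)^q = \beta x^{-2} M(x)$ for all $x \in \mu_{q+1}$, i.e. $x^2 L(x)^q = \beta M(x)$ on $\mu_{q+1}$. Since $x^{q} = x^{-1}$ on $\mu_{q+1}$, we have $x^2 L(x)^q = a_2^q x^{0} \cdot x^2 \cdot x^{-2} \cdots$ — more carefully, $L(x)^q = a_2^q x^{2q} + a_1^q x^q + a_0^q = a_2^q x^{-2} + a_1^q x^{-1} + a_0^q$, so $x^2 L(x)^q = a_2^q + a_1^q x + a_0^q x^2$. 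Thus the identity $a_0^q x^2 + a_1^q x + a_2^q = \beta(b_2 x^2 + b_1 x + b_0)$ must hold for all $q+1$ values $x \in \mu_{q+1}$; since $q+1 > 3$ for the cases of interest (and one handles small $q$ separately), comparing coefficients gives $b_2 = \beta^{-1} a_0^q$, $b_1 = \beta^{-1} a_1^q$, $b_0 = \beta^{-1} a_2^q$. So $M$ is completely determined by $L$ and $\beta$, and $L \sim_\beta M$ is automatic once we define $M$ this way; there is no further constraint here beyond $a_2, a_0 \neq 0$ (to keep $\deg M = 2$ and $M \neq L$ handled later).

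Next I would impose that $L/M$ is a bijection $\mu_{q+1} \to \mu_{q+1}$. Following the curve method sketched in the introduction, $L/M$ permutes $\mu_{q+1}$ iff the curve $\mathcal{C}: (L(x)M(y) - L(y)M(x))/(x-y) = 0$ has no points $(a,b) \in \mu_{q+1}^2$ with $a \neq b$ — but for degree two it is cleaner to argue directly. A degree-two rational map $L/M$ is $2$-to-$1$ generically on $\mathbb{P}^1(\overline{\mathbb{F}}_q)$, so for it to be injective on the $(q+1)$-element set $\mu_{q+1}$ is a strong condition. The key sub-step: $L/M$ maps $\mu_{q+1}$ into $\mu_{q+1}$ automatically (this is what $\beta$-association buys us, by Theorem~\ref{prop:good}-type reasoning: $(L/M)^{q+1} = L^q L / (M^q M) = (\beta x^{-2} M)(L)/((\beta x^{-2} L) M)\cdot\ldots$ — verify it equals $1$ on $\mu_{q+1}$), so we only need injectivity. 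I expect injectivity to force a very rigid algebraic structure. Parametrizing $\mu_{q+1}$ and pushing through, one finds that the "extra" root of $L(x) = c M(x)$ (for $c$ in the image) must always fall outside $\mu_{q+1}$ or coincide — this is where the condition $(A_1+iA_2)(C_1+iC_2) \neq 0$ and the trace/quadratic-form condition $A_1^2 + A_1A_2 + C_1^2 + C_1C_2 + \xi(A_2^2 + C_2^2) \neq 0$ will emerge. I would parametrize using the standard isomorphism $\mu_{q+1} \cong \mathbb{F}_q \cup \{\infty\}$ via a degree-one bijection from Proposition~\ref{bijinfty}, which conjugates the problem to asking when a specific degree-two rational function permutes $\mathbb{F}_q \cup \{\infty\}$; degree-two permutation rational functions of $\mathbb{P}^1(\mathbb{F}_q)$ are classified and exist only in even characteristic, which is exactly where "$q$ is even" comes from. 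This also explains why the answer is phrased with $i^2 = i + \xi$, $\Tr_{\mathbb{F}_q|\mathbb{F}_2}(\xi) = 1$ — the standard $\mathbb{F}_{q^2} = \mathbb{F}_q(i)$ normal-basis-type presentation in characteristic two.

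The hard part will be the bookkeeping that converts "$L/M$ permutes $\mu_{q+1}$" into the two displayed inequalities in the $A_j, C_j$ coordinates, and showing the conditions are not just necessary but sufficient. Concretely I would: (1) write $a_2, a_1, a_0$ in terms of the $\mathbb{F}_q$-basis $\{1, i\}$, forcing $a_1$ to vanish or be constrained (I anticipate $a_1 = 0$, matching the absence of an $x$-term in the stated $L$ — this should follow from requiring $L$ and $M$ to have no common factor and the association identity interacting with the bijection condition, or simply be the generic normalization after applying a degree-one bijection that the four "automatic" classes allow us to absorb); (2) compute $(L/M)$ as a map on $\mu_{q+1}$, substitute $x = (\rho u + \rho^q)/(\epsilon u + \epsilon^q)$ with $u$ ranging over $\mathbb{F}_q \cup \{\infty\}$, and simplify to a ratio of quadratics in $u$ over $\mathbb{F}_q$; (3) invoke the characterization of degree-$\le 2$ permutation rational functions of $\mathbb{P}^1(\mathbb{F}_q)$ (they are Möbius in odd characteristic, and in even characteristic the extra family is $u \mapsto (u^2 + au + b)/(\ldots)$ with a separability/no-fixed-ramification-point-in-$\mathbb{F}_q$ condition) to read off exactly the two inequalities; (4) check $L \neq M$ and $\deg L = \deg M = 2$ are compatible with these, and reconcile the condition $(k+1, q-1) = 1$, which comes straight from $(\deg L + 2k, q-1) = (2 + 2k, q-1) = 1$ — and since $q$ is even, $q - 1$ is odd, so $(2+2k, q-1) = (1+k, q-1)$, giving precisely $(k+1, q-1) = 1$. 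The conceptual obstacle is step (3): identifying the normal form and extracting the clean symmetric-looking inequality $A_1^2 + A_1A_2 + C_1^2 + C_1C_2 + \xi(A_2^2+C_2^2) \neq 0$, which I expect to be the norm (or a trace of a product) of $L(i)$ and $M(i)$, or equivalently the statement that a certain discriminant-type element is a nonzero non-"square" (additive analogue: not of the form $z^2 + z$) in $\mathbb{F}_q$; pinning down which invariant it is and verifying the equivalence in both directions is the crux.
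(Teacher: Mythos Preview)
Your route differs from the paper's. The paper never conjugates $\mu_{q+1}$ to $\mathbb{P}^1(\mathbb{F}_q)$ and never invokes a classification of degree-two permutation rational functions. Instead it works directly with the curve $\mathcal{C}:(L(x)M(y)-L(y)M(x))/(x-y)=0$. After writing $L,M$ in the $\{1,i\}$-basis (using $i^q=i+1$), this curve is the conic
\[
\alpha_1\,xy+\alpha_2\,(x+y)+\alpha_1^{\,q}=0,\qquad \alpha_2\in\mathbb{F}_q.
\]
If $\alpha_1\neq0$ one solves $y=(\alpha_2 x+\alpha_1^{\,q})/(\alpha_1 x+\alpha_2)$ and checks $y^q=1/y$ whenever $x\in\mu_{q+1}$, so the conic has off-diagonal $\mu_{q+1}^2$-points and $L/M$ is not injective. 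Hence bijectivity forces $\alpha_1=0$ and $\alpha_2\neq0$; in characteristic~$2$ the conic then reads $x+y=0$, i.e.\ $x=y$. A two-line case split on $B_2$ then shows $\alpha_1=0$ together with $\alpha_2\neq0$ forces $B_1=B_2=0$ (your anticipated $a_1=0$), and $\alpha_2$ is \emph{literally} the quadratic form $A_1^2+A_1A_2+C_1^2+C_1C_2+\xi(A_2^2+C_2^2)$. Finally, $q$ even is read off immediately from the gcd requirement $(2+2k,q-1)=1$, not from any classification; your step~(4) has the implication reversed.

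Your plan is plausible in outline but has two concrete gaps. First, you rely on an unproved external input: a classification of degree-two rational bijections of $\mathbb{P}^1(\mathbb{F}_q)$, plus the (not automatic) fact that the conjugate $\psi\circ(L/M)\circ\psi^{-1}$ is defined over $\mathbb{F}_q$ even though $\psi$ has $\mathbb{F}_{q^2}$-coefficients. Second, the step ``I anticipate $a_1=0$'' is precisely the crux and you give no mechanism for it; neither the no-common-factor condition nor any normalization by the ``four automatic classes'' forces the linear term to vanish. In the paper this vanishing, and the exact inequality in the $A_j,C_j$, both drop out of the conic computation in a few lines. Your approach could in principle be completed, but as written it defers the essential content to unspecified outside results, whereas the paper's curve argument is short and self-contained.
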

\proof
Let $L(x)=ax^2+bx+c$ and $M(x)=a^{\prime}x^2+b^{\prime}x+c^{\prime}$ with $a,b,c,a^{\prime},b^{\prime},c^{\prime}\in \mathbb{F}_{q^2}$ and $a, a'\neq 0$. Suppose that $(L,M)$ is $(\beta,k)$-good. Since  $(2(k+1),q-1)=1$, we must have that $q$ is even. Since $L$ and $M$ are $\beta$-associated,
we have that for all $x\in \mu_{q+1}$
$$L^q=x^{-2}(a^q+b^qx+c^q x^2) =\beta x^{-2}(a^{\prime}x^2+b^{\prime}x+c^{\prime})
\quad\Longrightarrow\quad (a^q,b^q,c^q)=\beta(c^{\prime},b^{\prime},a^{\prime}).$$
Since $i^q=i+1$, we can assume that
$$\begin{array}{lll}
L(x)&=& (C_1+(i+1)C_2)x^2+(B_1+(i+1)B_2)x+A_1+(i+1)A_2\quad\text{ and}\\
M(x)&=&\beta((A_1+iA_2)x^2+(B_1+iB_2)x+C_1+iC_2)
\end{array}$$
for some $A_i,B_i,C_i \in \mathbb{F}_q$.

To show that $L/M$ permutes $\mu_{q+1}$, we consider the curve $\mathcal{C}_M$ associated with $M$ given by $(L(x)M(y)-L(y)M(x))/(x-y)=0$, that is,
$$\alpha_1 xy + \alpha_2(x+y) + \alpha_1^q=0$$
where
$$\alpha_1=A_1 B_1 + A_1B_2+B_1C_1+B_1C_2+i(A_1B_2+A_2B_1+B_1C_2+B_2C_1)+\xi (A_2B_2+B_2C_2)$$ and
$$\alpha_2=A_1^2 +  A_1  A_2 + C_1^2 + C_1 C_2 + \xi( A_2^2 + C_2^2 ).$$
We note that $\alpha_2^q=\alpha_2$. If $\alpha_1=0$ and $\alpha_2 \neq 0$, then $x=y$, and so $L/M$ permutes $\mu_{q+1}$.
On the other hand, if $\alpha_1\neq 0$ then for $x\in \mu_{q+1}$ we have
$$y=\frac{\alpha_2 x+\alpha_1^q}{\alpha_1 x +\alpha_2}\quad \text{ where } \quad y^q=\frac{\alpha_2 +\alpha_1 x}{\alpha_1^q  +\alpha_2 x}=\frac{1}{y},$$
which means that the curve $\mathcal{C}_M$ has points in $\mu_{q+1}^2$. Consequently, $L/M$ does not permute $\mu_{q+1}$.

Suppose that $\alpha_1=0$ and $\alpha_2\neq 0$. If $B_2\neq 0$, then from
$$A_2B_2+B_2C_2=0 \text{ and } A_1B_2+A_2B_1+B_1C_2+B_2C_1=0 $$ we have that $A_2=C_2$ and  $A_1=C_1$, and thus, $\alpha_2=0$, a contradiction. A similar argument holds if $B_2=0$ and $B_1\neq 0$.
So $B_1=B_2=0$. In this case, the polynomial
$$M(x)=\beta((A_1+iA_2)x^2+C_1+iC_2)$$
has a unique root, which belongs to $\mu_{q+1}$ if and only if
$$\left(\frac{C_1+iC_2}{A_1+iA_2}\right)^q =\frac{A_1+iA_2}{C_1+iC_2}.$$
This is equivalent to $(C_1+iC_2+C_2)(C_1+iC_2)=(A_1+iA_2+A_2)(A_1+iA_2)$, that is, $\alpha_2=0$, a contradiction.
Hence $M(x)\neq 0$ for every $x\in\mu_{q+1}$.
\endproof

In the next results we  show more examples of good polynomials.

\begin{proposition}\label{prop:grado3}
Let $q$ be odd, $k\geq 0$ be an integer and $i\in \mathbb{F}_{q^2}$ be such that $3\nmid (q+1)$, $(3+2k,q-1)=1$ and  $i^q=-i $.
If
\begin{align*}
&L(x) =-A_1x^3+(-3 A_1 + B_1-i B_2) x^2+(B_1-i B_2) x+A_1\text{ and }\\
&M(x) =A_1x^3+(B_1+i B_2) x^2+(-3 A_1 + B_1+i B_2) x-A_1
\end{align*}
with $A_1,B_1, B_2 \in \mathbb{F}_q$ and $A_1(3A_1 - 2 B_1)\neq 0$, then  $(L,M)$ is $(1,k)$-good.
\end{proposition}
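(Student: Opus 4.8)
The strategy is to verify directly the two defining conditions for $(L,M)$ to be $(1,k)$-good, following the same scheme used in the proof of Proposition~\ref{prop:grado2}: first check $L\sim_1 M$ by comparing coefficients, then show $L/M$ permutes $\mu_{q+1}$ by analysing the curve $\mathcal{C}_M$.

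First I would confirm the hypothesis $L\sim_1 M$. Writing $L(x)=\sum_{i=0}^3 a_i x^i$, the relation $L^q=x^{-3}M$ for $x\in\mu_{q+1}$ is equivalent to $(a_3^q,a_2^q,a_1^q,a_0^q)=(a_0,a_1,a_2,a_3)$ with $\beta=1$; that is, $M$ is the ``conjugate-reciprocal'' of $L$. Using $i^q=-i$ one reads off that the coefficients of $L$, namely $-A_1$, $-3A_1+B_1-iB_2$, $B_1-iB_2$, $A_1$, are sent under $x\mapsto x^q$ to $-A_1$, $-3A_1+B_1+iB_2$, $B_1+iB_2$, $A_1$, which are exactly the reversed coefficients of $M$. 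So $L\sim_1 M$ holds, and the condition $(\deg(L)+2k,q-1)=(3+2k,q-1)=1$ is assumed. It remains only to prove that $L/M$ permutes $\mu_{q+1}$.

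For the permutation condition I would form the curve $\mathcal{C}_M:\ \bigl(L(x)M(y)-L(y)M(x)\bigr)/(x-y)=0$ and show it has no point $(a,b)\in\mu_{q+1}^2$ with $a\neq b$, together with checking $M$ has no root in $\mu_{q+1}$. Because $L$ and $M$ both have degree $3$, the numerator $L(x)M(y)-L(y)M(x)$ is divisible by $x-y$ and the quotient is a symmetric polynomial of bidegree $(2,2)$; after substituting the explicit coefficients (and using $i^2=\xi$ or whatever minimal polynomial $i$ satisfies over $\F_q$, here $i^q=-i$ so $i^2\in\F_q$) the equation should collapse to something of low complexity — I expect it to factor, or to reduce to a relation of the shape $\delta_1(x^2 y^2 + 1) + \delta_2(x^2 y + x y^2 + x + y) + \delta_3(x^2+y^2) + \delta_4\, xy = 0$ with the $\delta_j$ explicit polynomials in $A_1,B_1,B_2,\xi$, several of which vanish. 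The key point will be that, for $x\in\mu_{q+1}$, one has $x^q=1/x$, so any putative solution $y$ is forced (by solving the low-degree equation for $y$ in terms of $x$) to satisfy $y^q=1/y$ automatically — or, conversely, the equation has no $\mu_{q+1}$-solutions at all — and the nonvanishing hypothesis $A_1(3A_1-2B_1)\neq 0$ is precisely what rules out the degenerate sub-cases (mirroring how $\alpha_2\neq 0$ was used in Proposition~\ref{prop:grado2}). I would also handle separately the points of $\mathcal{C}_M$ at infinity and the common-root locus of $L$ and $M$, and verify $M(x)\neq 0$ on $\mu_{q+1}$ by the same $x^q=1/x$ substitution applied to $M(x)=0$, which yields a relation contradicting $A_1(3A_1-2B_1)\neq 0$.

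The main obstacle is the bookkeeping in the curve computation: after dividing by $x-y$ one must simplify a genuinely messy expression over $\F_{q^2}$, keep track of the $i$-part versus the $\F_q$-part, and correctly identify which coefficient combinations vanish and which are controlled by $A_1(3A_1-2B_1)\neq 0$ and by $3\nmid(q+1)$ (the latter presumably ensures a cube or a discriminant is nonzero, so that the relevant auxiliary polynomial stays degree $2$ rather than collapsing). Once the curve equation is in its reduced bilinear form, the argument that any solution $(x,y)$ with $x\in\mu_{q+1}$ forces $y=x$ — or forces $y\in\mu_{q+1}$ in a way that still contradicts something — is short and parallels the final paragraphs of Proposition~\ref{prop:grado2}; I do not expect conceptual difficulty there, only care with the characteristic (since $q$ is odd here, unlike in the degree-two case) and with the role of the hypothesis $i^q=-i$ throughout.
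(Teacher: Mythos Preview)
Your overall strategy matches the paper's exactly: verify $L\sim_1 M$ via $i^q=-i$, then analyse the curve $\bigl(L(x)M(y)-L(y)M(x)\bigr)/(x-y)=0$. What your sketch is missing is the actual outcome of the curve computation, and your guess about what happens is off in two respects.

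First, the curve does not merely ``reduce to a bilinear form with some $\delta_j$ controlled by the hypotheses''; it factors completely over $\mathbb F_q$ as
\[
A_1(3A_1-2B_1)\,(xy+y+1)(xy+x+1)=0.
\]
In particular the parameters $B_2$ and $i$ disappear entirely from the quotient --- this is the key simplification you should compute, and it explains immediately why the nonvanishing hypothesis is exactly $A_1(3A_1-2B_1)\neq 0$.

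Second, your speculation that $3\nmid(q+1)$ ``ensures a cube or a discriminant is nonzero'' is not the mechanism. Once you have the factorization above, a point $(x,y)\in\mu_{q+1}^2$ on the component $xy+y+1=0$ forces (using $x^q=1/x$) the relation $x^2+x+1=0$; the roots of $T^2+T+1$ are the primitive cube roots of unity, and these lie in $\mu_{q+1}$ if and only if $3\mid(q+1)$. So the hypothesis $3\nmid(q+1)$ is precisely what rules out $\mu_{q+1}^2$-points on either conic factor. The check that $M$ has no root in $\mu_{q+1}$ then follows because any such root would (via $L\sim_1 M$) also be a root of $L$, hence a common root, forcing the factored expression to vanish identically in $y$ --- impossible since $A_1(3A_1-2B_1)\neq 0$.
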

\proof
By the condition  $i^q=-i$, we get that $L \sim_1 M$.
The function $L/M$ permutes $\mu_{q+1}$ since the curve given by $(L(x)M(y)-L(y)M(x))/(x-y)=0$ factors as
$$A_1(3A_1 - 2 B_1)(x y + y + 1)(x y + x + 1)=0.$$
The component  $x y + y + 1=0$ has points in $\mu_{q+1}^2$ if and only if $T^2+T+1$ has roots in $\mu_{q+1}$. This cannot happen since $3\nmid (q+1)$.
Therefore, the polynomial $M$ has  as a root $\xi$ in $\mu_{q+1}$ if and only if $M(\xi)=L(\xi)$, which is equivalent to $A_1^3(3A_1 - 2 B_1)^3=0$, a contradiction.
\endproof


\begin{ex}
 In the proofs of \textup{\cite[Theorems 3.4 and 3.6]{BQ2017}} there are reciprocal polynomials $L$ and $M$ of degree three over $\mathbb{F}_{q^2}$ such that $(L,M)$ is $(\beta,k)$-good, $(3+2k,q-1)=1$ and $L$ has no root in $\mu_{q+1}$.
\end{ex}

\begin{proposition}\label{anydeg}
Let $q$ be even, and $k,s\ge 0$ be  integers such that $(s-q,q+1)=1$ and $(s+k(q+1),q-1)=1$. Consider $$L(x)=\sum_{i=0}^{\lfloor(q-1)/2\rfloor} (a_i x^{i} + a_i^q x^{q-i})$$ in $\mathbb{F}_{q^2}[x]$ such that $L(x)\ne 0$ for any $x\in\mu_{q+1}$.
Then $L$ is $(1,q,k,s)$-good.
\end{proposition}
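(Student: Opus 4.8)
The plan is to reduce everything to Theorem~\ref{var1}, which says that $x^{s+k(q+1)}L(x^{q-1})$ permutes $\mathbb{F}_{q^2}$ precisely when $(s-t,q+1)=1$, $(s+k(q+1),q-1)=1$, and $L$ has no roots in $\mu_{q+1}$, provided $L\sim_{1,t}L$ with $t=q$. Since the hypotheses already hand us $(s-q,q+1)=1$, $(s+k(q+1),q-1)=1$, and $L(x)\neq 0$ for $x\in\mu_{q+1}$, the entire task is to verify that the displayed polynomial satisfies $L\sim_{1,q}L$, that is, $L^q = x^{-q}L$ for every $x\in\mu_{q+1}$.

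First I would compute $L^q$ directly. Since $q$ is even, the Frobenius $x\mapsto x^q$ is a field automorphism of $\mathbb{F}_{q^2}$, so $L(x)^q = \sum_{i=0}^{\lfloor(q-1)/2\rfloor}\bigl(a_i^q x^{qi} + a_i x^{q(q-i)}\bigr)$. Now use $x\in\mu_{q+1}$, i.e. $x^{q+1}=1$, equivalently $x^q = x^{-1}$. Then $x^{qi} = x^{-i}$ and $x^{q(q-i)} = x^{-(q-i)} = x^{-q}x^i$ (using $x^{-q} = x$ as well, one may also write $x^{q(q-i)}=x^{q^2-qi}=x^{1-qi}$ since $x^{q^2}=x$; but the cleanest bookkeeping is $x^{qj}=x^{-j}$ for all integers $j$). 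Applying $x^{qj}=x^{-j}$ termwise gives $L(x)^q = \sum_{i}\bigl(a_i^q x^{-i} + a_i x^{-(q-i)}\bigr)$. On the other hand $x^{-q}L(x) = x^{-q}\sum_i (a_i x^i + a_i^q x^{q-i}) = \sum_i\bigl(a_i x^{i-q} + a_i^q x^{-i}\bigr)$, and since $x^{i-q} = x^i\cdot x^{-q} = x^i\cdot x = x^{i+1}$... here I must be careful: the two expressions match term-by-term once I note $x^{-q}=x$ on $\mu_{q+1}$, so $x^{-q}\cdot x^{q-i} = x^{q-i}\cdot x = x^{q-i+1}$, hmm, let me instead keep everything as powers of $x^q$. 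The honest verification: $x^{-q}L(x)$ has the term $x^{-q}a_i x^i = a_i x^{i-q}$ and $x^{-q}a_i^q x^{q-i} = a_i^q x^{-i}$; meanwhile $L(x)^q$ has $a_i^q x^{-i}$ and $a_i x^{-(q-i)} = a_i x^{i-q}$. So indeed $L(x)^q = x^{-q}L(x)$ on $\mu_{q+1}$, term by term, with the roles of the two summands in each pair swapped. Thus $L\sim_{1,q}L$.

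With $L\sim_{1,q}L$ established, I would also record that $L$ and $L$ are ``distinct'' is not required here since self-association in Definition~\ref{Definition:associated} does not impose distinctness, and that $L\neq 0$ (as a polynomial) because its top coefficient in degree $q$ is $a_0^q$ while its constant coefficient is $a_0$, and if $a_0=0$ one looks at the next nonzero $a_i$; in any case the hypothesis $L(x)\neq0$ on $\mu_{q+1}$ forces $L$ nonzero. Then I invoke Theorem~\ref{var1} with $t=q$: the three conditions there are $(s-q,q+1)=1$ (given), $(s+k(q+1),q-1)=1$ (given), and $L$ has no roots in $\mu_{q+1}$ (given). Hence $x^{s+k(q+1)}L(x^{q-1})$ permutes $\mathbb{F}_{q^2}$, which by Definition~\ref{Definition:good} is exactly the statement that $L$ is $(1,q,k,s)$-good.

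The only genuinely delicate point is the exponent bookkeeping modulo $q+1$ in the computation of $L^q$ on $\mu_{q+1}$ — making sure that $x^{qj}=x^{-j}$ is applied consistently and that the pairing $a_i\leftrightarrow a_i^q$ in each summand lands correctly so that the two summands of each bracket exchange places under Frobenius. Everything else — invoking Theorem~\ref{var1}, checking that the hypotheses translate verbatim into the hypotheses of that theorem — is immediate. No curve-theoretic argument is needed because the no-root condition on $\mu_{q+1}$ is assumed outright rather than derived.
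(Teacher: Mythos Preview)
Your verification that $L^q = x^{-q}L$ on $\mu_{q+1}$ is correct and is exactly the paper's computation; once that is done, the three remaining conditions in Definition~\ref{Definition:good} are given as hypotheses, so $L$ is $(1,q,k,s)$-good by definition and you are finished.

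The detour through Theorem~\ref{var1} is unnecessary, and your final sentence misreads Definition~\ref{Definition:good}: ``$(1,q,k,s)$-good'' is \emph{not} defined as ``$x^{s+k(q+1)}L(x^{q-1})$ permutes $\mathbb{F}_{q^2}$'' but as the conjunction of the four conditions $L\sim_{1,q}L$, $(s-q,q+1)=1$, $(s+k(q+1),q-1)=1$, and $L$ has no roots in $\mu_{q+1}$. Theorem~\ref{var1} is the converse direction --- it says good polynomials yield permutations --- but that is a consequence, not the definition. Fortunately you have already checked all four defining conditions before ever invoking Theorem~\ref{var1}, so the proposition is proved; just delete the last step. (The paper additionally remarks that the two gcd hypotheses by themselves force $q$ to be even, but this is incidental since $q$ even is assumed outright.)
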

\proof The requirements,  $(s-q,q+1)=1$ and $(s+k(q+1),q-1)=1$, immediately yield that $q$ is even. For $x \in \mu_{q+1}$, we compute
$$
L^q = \sum_{i=0}^{\lfloor(q-1)/2\rfloor}(a_i^q x^{qi}+ a_i x^{q(q-i)})
= x^{-q}\sum_{i=0}^{\lfloor(q-1)/2\rfloor} (a_i^q x^{q-i}+ a_i x^i)= x^{-q} L.
$$
Hence $L\sim_{1,q} L$ and the claim follows.
\endproof



\begin{ex}\label{Ex2}
Let $q$ be even. Consider $h(x) = a x^i+a^qx^{q-i}$ over $\mathbb{F}_{q^2}$ with $(2i+1)\mid (q+1)$ and $a^{(q^2-1)/(2i+1)}\neq 1$. A nonzero root $x$ of $h$  satisfies $a^{q-1}x^{q-2i}=1$. It belongs to $\mu_{q+1}$ if and only if
$x^{2i+1}= x^{2i-q}$ or $x^{2i+1}=a^{q-1}$. Hence
$$1=x^{q+1}=\left(x^{2i+1}\right)^{\frac{q+1}{2i+1}}=\left(a^{q-1}\right)^{\frac{q+1}{2i+1}}= a^\frac{q^2-1}{2i+1}\neq 1,$$ a contradiction.
Under the $\gcd$ requirements from Proposition~\ref{anydeg},  $(s-q,q+1)=1$ and $(s+k(q+1),q-1)=1$, we conclude that $h$ is $(1,q,k,s)$-good.
\end{ex}

\section{Permutation polynomials from bijections on $\mu_{q^{k-1}+\cdots +q + 1}$}\label{Sec4}

There is a natural way to extend Theorems ~\ref{prop:good}  and \ref{var1} for $k\geq 2$. We write $q^k-1=(q-1)(q^{k-1}+\cdots +q + 1)$ and consider $\mu_{q^{k-1}+\cdots+q+1} =\{ x \in \mathbb{F}_{q^{k}} \mid x^{q^{k-1}+\cdots+q+1}=1 \}.$

\begin{theorem}\label{general}
Let  $k \geq 2$ be an integer.  Suppose that $L,M \in \mathbb{F}_{q^k}[x]$ and  $\beta\in \mu_{q^{k-1}+\cdots+q+1}$ are such that
$M^{q}=\beta x^{-\deg(L)} L$ for any $x \in \mu_{q^{k-1}+\cdots+q+1}$. Then $x^{\deg(L)+k(q^{k-1}+\cdots+q+1)}M(x^{q-1})$ permutes $\mathbb F_{q^k}$ if and only if $(\deg(L)+k(q^{k-1}+\cdots+q+1),q-1)=1$ and $L/M$ permutes $\mu_{q^{k-1}+\cdots+q+1}$.
\end{theorem}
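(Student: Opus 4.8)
The plan is to apply the Tucker--Zieve criterion (Theorem~\ref{ppcriterion}) with the natural substitutions $d=q-1$ and $r=\deg(L)+k(q^{k-1}+\cdots+q+1)$, so that the ambient field is $\mathbb F_{q^k}$ and the relevant root-of-unity group is $\mu_{(q^k-1)/(q-1)}=\mu_{q^{k-1}+\cdots+q+1}$. Writing $e=q^{k-1}+\cdots+q+1$ for brevity, the polynomial in question is $x^{\deg(L)+ke}\,M(x^{q-1})$, which has the shape $x^r\,h(x^d)$ with $h(x)=M(x)$; thus Theorem~\ref{ppcriterion} tells us it permutes $\mathbb F_{q^k}$ if and only if $(r,q-1)=1$ and $x^r M(x)^{q-1}$ permutes $\mu_e$.

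First I would dispose of the gcd condition: the hypothesis $(\deg(L)+ke,q-1)=1$ is exactly $(r,q-1)=1$, so nothing needs to be simplified there (unlike the $k=1$ case, there is no reduction of $ke$ modulo $q-1$ to perform, or rather it is harmless). Then I would evaluate $x^r M(x)^{q-1}$ on an element $x\in\mu_e$. The key algebraic step is that for $x\in\mu_e$ we have $x^{q^{k-1}+\cdots+q+1}=1$, and the hypothesis gives $M(x)^q=\beta x^{-\deg(L)}L(x)$, hence
\[
x^{r}M(x)^{q-1}=x^{\deg(L)+ke}\,\frac{M(x)^q}{M(x)}=x^{\deg(L)+ke}\,\beta x^{-\deg(L)}\,\frac{L(x)}{M(x)}=\beta\,x^{ke}\,\frac{L(x)}{M(x)}=\beta\,\frac{L(x)}{M(x)},
\]
using $x^{ke}=(x^{e})^{k}=1$ and $x^{\deg(L)}\cdot x^{-\deg(L)}=1$. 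Since $\beta\in\mu_e$, multiplication by $\beta$ is a bijection of $\mu_e$ onto itself, so $x^{r}M(x)^{q-1}$ permutes $\mu_e$ if and only if $L/M$ permutes $\mu_e$. Combining the two equivalences yields the statement. This mirrors verbatim the proofs of Theorems~\ref{prop:good} and~\ref{var1}.

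The one point that deserves care, and is the only real obstacle, is to make sure $M$ (hence $x^rM(x)^{q-1}$) is well defined and nonzero as a function on $\mu_e$ wherever the argument requires a quotient, and more importantly that the notion ``$L/M$ permutes $\mu_e$'' is interpreted consistently with how it is used elsewhere in the paper: namely $L/M$ is regarded as a map $\mu_e\to\mu_e\cup\{\infty\}$ (or, when $M$ has no zeros on $\mu_e$, as an honest map $\mu_e\to\mu_e$), and ``permutes'' means it is a bijection of $\mu_e$ onto itself. One should observe that the relation $M(x)^q=\beta x^{-\deg(L)}L(x)$ on $\mu_e$ forces $M$ and $L$ to vanish at exactly the same points of $\mu_e$ (with $\beta x^{-\deg(L)}\neq 0$ there), so the quotient $L/M$ is a genuine map $\mu_e\to\mu_e\cup\{\infty\}$, and the displayed identity $x^rM(x)^{q-1}=\beta L(x)/M(x)$ holds as an identity of such maps; if $M$ has a zero on $\mu_e$ then neither $x^rM(x)^{q-1}$ nor $L/M$ can be a permutation of $\mu_e$, and both sides of the equivalence fail, so the statement is still correct. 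Apart from this bookkeeping, the proof is a direct transcription of the $k=1$ argument with $q+1$ replaced by $e=q^{k-1}+\cdots+q+1$.
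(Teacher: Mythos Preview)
Your proof is correct and follows essentially the same route as the paper: apply Theorem~\ref{ppcriterion}, then compute $x^{\deg(L)+ke}M(x)^{q-1}=\beta\,L(x)/M(x)$ on $\mu_e$ using the hypothesis $M^q=\beta x^{-\deg(L)}L$, and conclude since multiplication by $\beta\in\mu_e$ is a bijection of $\mu_e$. Your additional remarks about common zeros of $L$ and $M$ on $\mu_e$ are not in the paper's terse proof but are a reasonable clarification.
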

\proof
We apply Theorem~\ref{ppcriterion}.
For $x \in \mu_{q^{k-1}+\cdots+q+1}$, we obtain that
$$x^{\deg(L)+k(q^{k-1}+\cdots+q+1)}M^{q-1} = x^{\deg(L)}M^q/M=\beta x^{\deg(L)}x^{-\deg(L)}L/M=\beta L/M.$$
The claim now follows as $\beta \in \mu_{q^{k-1}+\cdots+q+1}$.
\endproof


\begin{theorem}\label{var2}
Let  $k \geq 2$, $s\geq0$ be  integers,  $\beta\in \mu_{q^{k-1}+\cdots+q+1}$, and $L \in \mathbb{F}_{q^k}[x]$ be such that  $L^q=\beta x^{-t} L$ for some fixed integer $t$. Then $x^{s+k(q^{k-1}+q^{k-2}+\cdots+q+1)}L(x^{q-1})$ permutes $\mathbb{F}_{q^k}$ if and only if $(s-t,q^{k-1}+\cdots+q+1)=1$, $(s+k(q+1),q-1)=1$, and $L$ has no roots in $\mu_{q^{k-1}+\cdots+q+1}$.
\end{theorem}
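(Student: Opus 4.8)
The plan is to mimic the proof of Theorem~\ref{var1} verbatim, replacing $\mu_{q+1}$ by $\mu_{q^{k-1}+\cdots+q+1}$ throughout and invoking Theorem~\ref{ppcriterion} in the form that applies to $\mathbb{F}_{q^k}$ with $d=q-1$, so that $(q^k-1)/d = q^{k-1}+\cdots+q+1$. First I would record that by Theorem~\ref{ppcriterion}, the polynomial $x^{s+k(q^{k-1}+\cdots+q+1)}L(x^{q-1})$ permutes $\mathbb{F}_{q^k}$ precisely when $\bigl(s+k(q^{k-1}+\cdots+q+1),\,q-1\bigr)=1$ and $x^{s+k(q^{k-1}+\cdots+q+1)}L(x)^{q-1}$ permutes $\mu_{q^{k-1}+\cdots+q+1}$. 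I would then observe, exactly as in Theorems~\ref{prop:good} and~\ref{general}, that $\bigl(s+k(q^{k-1}+\cdots+q+1),\,q-1\bigr)=1$ is equivalent to $(s+k(q+1),q-1)=1$, since $q^{k-1}+\cdots+q+1\equiv k \pmod{q-1}$.

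Next I would evaluate the relevant expression on the unit circle. For $x\in\mu_{q^{k-1}+\cdots+q+1}$ we have $x^{q^{k-1}+\cdots+q+1}=1$, so $x^{k(q^{k-1}+\cdots+q+1)}=1$, and using the hypothesis $L^q=\beta x^{-t}L$ we compute
$$x^{s+k(q^{k-1}+\cdots+q+1)}L^{q-1}=x^{s}\,\frac{L^q}{L}=x^{s}\cdot\beta x^{-t}=\beta x^{\,s-t}.$$
Thus on $\mu_{q^{k-1}+\cdots+q+1}$ the induced map is $x\mapsto \beta x^{\,s-t}$, provided $L$ has no zeros there (otherwise the rational function $L^{q-1}$ is not defined as a map $\mu_{q^{k-1}+\cdots+q+1}\to\mathbb{F}_{q^k}^{*}$, and one checks that in that case $x^{s+k(q^{k-1}+\cdots+q+1)}L(x^{q-1})$ has a nonzero root whose $(q-1)$-th power is the bad element, violating injectivity; conversely if $L$ has no zero on $\mu_{q^{k-1}+\cdots+q+1}$ the only zero of the original polynomial in $\mathbb{F}_{q^k}$ is $0$). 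Since $\beta\in\mu_{q^{k-1}+\cdots+q+1}$, the map $x\mapsto \beta x^{\,s-t}$ is a bijection of $\mu_{q^{k-1}+\cdots+q+1}$ if and only if raising to the power $s-t$ is a bijection of that cyclic group of order $q^{k-1}+\cdots+q+1$, i.e.\ if and only if $\bigl(s-t,\,q^{k-1}+\cdots+q+1\bigr)=1$. Combining the three conditions gives the stated equivalence.

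The only genuinely delicate point is the bookkeeping around the zeros of $L$: Theorem~\ref{ppcriterion} is a statement about a \emph{polynomial} $x^{s}h(x)^{d}$ permuting $\mu_{(q^k-1)/d}$, so one must be careful that when $L$ vanishes somewhere on $\mu_{q^{k-1}+\cdots+q+1}$ the substituted polynomial indeed fails to permute, and that this corresponds exactly to failure of injectivity of the original map on $\mathbb{F}_{q^k}$. This is handled exactly as in the proofs of Theorems~\ref{var1} and~\ref{var2}; since $L$ having no roots in $\mu_{q^{k-1}+\cdots+q+1}$ is already listed among the conclusions, no new argument is needed beyond what those proofs contain. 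Everything else is a formal transcription of the single-extension case, so I do not anticipate any real obstacle.
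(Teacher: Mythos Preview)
Your approach is exactly the paper's: apply Theorem~\ref{ppcriterion} with $d=q-1$ over $\mathbb{F}_{q^k}$, compute $x^{s+k(q^{k-1}+\cdots+q+1)}L^{q-1}=x^sL^q/L=\beta x^{s-t}$ on $\mu_{q^{k-1}+\cdots+q+1}$, and read off the gcd conditions. The paper's own proof is two lines and does precisely this.

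One concrete slip, however. You write that $\bigl(s+k(q^{k-1}+\cdots+q+1),q-1\bigr)=1$ is equivalent to $(s+k(q+1),q-1)=1$ ``since $q^{k-1}+\cdots+q+1\equiv k\pmod{q-1}$.'' That congruence is right, but it gives $k(q^{k-1}+\cdots+q+1)\equiv k^2\pmod{q-1}$, whereas $k(q+1)\equiv 2k\pmod{q-1}$; these agree only when $k=2$ or $(q-1)\mid k(k-2)$. The condition that genuinely emerges from Theorem~\ref{ppcriterion} is $\bigl(s+k(q^{k-1}+\cdots+q+1),q-1\bigr)=1$ (equivalently $(s+k^2,q-1)=1$), which is exactly what the companion Theorem~\ref{general} records. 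The appearance of $(s+k(q+1),q-1)$ in the stated Theorem~\ref{var2} is almost certainly a copy-paste artifact from Theorem~\ref{var1}; you should not try to ``prove'' an equivalence that is false for $k>2$. Apart from this point (which is a defect of the statement, not of your method), your argument is complete and matches the paper.
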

\proof
We apply Theorem~\ref{ppcriterion}. For $x \in \mu_{q^{k-1}+\cdots+q+1}$, we obtain that
$$x^{s+k(q^{k-1}+\cdots+q+1)}L^{q-1} = x^{s}L^q/L=\beta x^{s-t}.$$
Since $\beta \in \mu_{q^{k-1}+\cdots+q+1}$ and $(s-t,q^{k-1}+\cdots+q+1)=1$, the claim follows.
\endproof

Next we show a class of polynomials that can be used to construct permutation polynomials using Theorem~\ref{var2}.

\begin{proposition}\label{Prop:k=3}
Let  $0<j\leq k\leq q/3$. For $A\in \mathbb{F}_{q^3}$ and $B\in \mathbb{F}_q$, the polynomial
$$L_{A,B}(x)=Ax^{3kq+3j}+Bx^{(k+j)q+2j-k}+A^qx^{3jq+3j-3k}+A^{q^2}$$
satisfies $L_{A,B}^q=x^{-3kq-3j} L_{A,B}$ for every $x \in \mu_{q^2+q+1}$.
\end{proposition}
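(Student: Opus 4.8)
The plan is to verify the identity $L_{A,B}^q = x^{-3kq-3j}\,L_{A,B}$ directly, working coefficient by coefficient on $\mu_{q^2+q+1}$, exploiting the fundamental reduction rule $x^{q^2+q+1}=1$ (equivalently $x^{q^2} = x^{-q-1}$ and $x^{q^3}=x$) available for every $x$ in that set. First I would note that for $x\in\mu_{q^2+q+1}$ one has $x^{q^3}=x$, so raising to the $q$-th power permutes the three ``conjugate'' exponent-slots cyclically; concretely, if $x^e$ appears then $(x^e)^q = x^{eq}$, and $eq$ should be reduced modulo $q^2+q+1$. So the computation is really bookkeeping with exponents modulo $N:=q^2+q+1$.

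Next I would compute $L_{A,B}(x)^q$ term by term, using that $B\in\mathbb F_q$ so $B^q=B$, that $A^{q^3}=A$ since $A\in\mathbb F_{q^3}$, and the Frobenius-linearity of $x\mapsto x^q$ on sums. This gives
\[
L_{A,B}^q = A^q x^{3kq^2+3jq} + B x^{(k+j)q^2+(2j-k)q} + A^{q^2} x^{3jq^2+(3j-3k)q} + A.
\]
Then I would reduce each exponent modulo $N=q^2+q+1$ using $q^2\equiv -q-1$. For instance $3kq^2+3jq \equiv 3k(-q-1)+3jq = (3j-3k)q - 3k$, and after multiplying the target identity through by $x^{3kq+3j}$ (i.e.\ checking $x^{3kq+3j}L_{A,B}^q = L_{A,B}$) each of the four exponents on the left should land exactly on one of the four exponents $3kq+3j,\ (k+j)q+2j-k,\ 3jq+3j-3k,\ 0$ of $L_{A,B}$, with the matching coefficient. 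The hypothesis $0<j\le k\le q/3$ is there precisely to keep all the relevant exponents in a controlled range so that the reductions modulo $N$ are the unique ``small'' representatives and the four slots are genuinely distinct — so I would also check that the bound $k\le q/3$ guarantees quantities like $3kq+3j$, $(k+j)q+2j-k$, etc., are nonnegative and below a bound where no unwanted wraparound occurs.

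I do not expect a serious obstacle here: the statement is a pure identity with no irreducibility or point-counting input, so once the exponent arithmetic is organized the proof closes. The one place to be careful — and the only real ``content'' — is matching up the three Frobenius conjugates $A$, $A^q$, $A^{q^2}$ with the correct exponent slots after reduction, and confirming the cyclic shift of slots induced by $x\mapsto x^q$ is consistent with the stated form of $L_{A,B}$; a sign or a transposed pair of exponents would be the likely source of error, so I would lay out a small table of (exponent, coefficient) pairs before and after applying Frobenius and multiplying by $x^{3kq+3j}$, and simply read off the agreement.
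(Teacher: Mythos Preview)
Your proposal is correct and follows essentially the same approach as the paper: compute $L_{A,B}^q$ term by term using $B^q=B$ and $A^{q^3}=A$, then multiply through by $x^{3kq+3j}$ and reduce each exponent modulo $q^2+q+1$ (the paper does this by factoring out $3k$, $k+j$, or $3j$ copies of $q^2+q+1$ rather than via $q^2\equiv -q-1$, but this is cosmetic). Your remark about the role of the bound $0<j\le k\le q/3$ matches the paper's observation that the exponents satisfy $1 < (k+j)q+2j-k,\ 3jq+3j-3k < 3kq+3j \le q^2+q$.
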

\proof
We observe  that $1 < (k+j)q+2j-k, 3jq+3j-3k < 3kq+3j\leq q^2+q$.For $x \in \mu_{q^2+q+1}$, we have that
\begin{eqnarray*}
L_{A,B}^q &=& A^qx^{3kq^2+3jq}+Bx^{(k+j)q^2+(2j-k)q}+A^{q^2}x^{3jq^2+(3j-3k)q}+A \\
&=&\frac{A^qx^{3kq^2+3jq+3kq+3j}+Bx^{(k+j)q^2+(2j-k)q+3kq+3j}+A^{q^2}x^{3jq^2+(3j-3k)q+3kq+3j}+Ax^{3kq+3j}}{x^{3kq+3j}} \\
&=&\frac{A^qx^{3jq+3j-3k}+Bx^{(k+j)q+2j-k}+A^{q^2}+Ax^{3kq+3j}}{x^{3kq+3j}}=\frac{L_{A,B}}{x^{3kq+3j}}.
\end{eqnarray*}
\endproof

Consider the following number $R$.

{\tiny
\begin{longtable}{c}
$R=$5441\\
5676365797077637141450586653222239320299764938998099705894071352243683085327110128286703483238476973397002404\\

6336959383040020572535656462087700990144448533299525758921522351819247209622176668910816961864549530309479655\\
    8944772731555018237185037053331470731435723299754298878959745419466062019178032287477005478718395309071239879\\
    9512272244940182092323063835500724103308099775996494047115010379686917413489292225619989289985558426384102396\\
    0242491499439231026956456844274382784161805452560664946999045428509995577306871121110395588039150140763221275\\
    8254726926983058860397893644379710130686641078337368194371747751808789029762359209804585376900978052798695554\\
    9717105377303966806014138301988303903745229199508482837237155735526437742147986415584307271942779370464042363\\
    4291632504052005616026693998097888408809903568049767905049020272041868743359931306317006104083213066173082930\\
    6547227028646224916826046697643250240184194946963570860743922536593958949634787363676767604817788439890772475\\
    7196520102634540337674729327883553726822895205403422554082727098547413192676902596050860718136721876365947253\\
    6719476168404542519163512021540837073195835107996580808265799380285437608944664764663025601850714240587122556\\
    6431259364390820765272488409000721487274813407479996178887406683776378602708245543719485677456061038242819904\\
    8969881205063842269154691565470178975516788962990716343152188005276428528237302954758849529071129046029630601\\
    2701325870360960933979759946540674382336022436823131958732148421091329532463627675691466831201845431885134658\\
    3474173657897011457015283289320304453713008035355987980917917016166872401884009076569961776946686104911869779\\
    0645651903080948306065535393677153283039700127012609755241629454579044093292678121528699089867030766409525749\\
    2286363707063912198790921211416716743641296977397383441494891453337913803716999605011616643879015822925136864\\
    3174346389465592919537305423464081034051482989338742834208094814564187745474731308510156871313398631836997710\\
    6320444622491960037709440548998233256599246697876822834793085410714547334077801872546103718595830615096888312\\
    5590548388716551373976296904376655228167444469799840919329902226478698668061788498911871968904140805748099872\\
    5092577098839437028839516642873714911317433662425017169406954870413442982643094382849026978037312441649951096\\
    9695178663475056117305629449725158578810877922500872312479344149054119555113940616046236617452961213592876876\\
    8103738719543975199278488746433478738410141239554625448441884255142024258531235926825328566967005612193046894\\
    8220152110777223032235849218587634764953849659031033301063504002326230300880139023469084387583024223394899189\\
    2688937450628774612107671933525346928580016163482833343334391253028082979785505797065431793882742029537684772\\
    5786263340403554434189660391874946819314380154683484739982862469991782471944972525248814062972408602600811154\\
    9884102202253383927714575817913570588613734470225891229318737324882350450825940131197628289302128762241548533\\
    8733147304316334370149507205975805061127579825575571617277440699560375071576867014227409481740347534439471100\\
    6203501165020588848877494851085221055109230470915533334073332873889306632923379303416240611321578785371081217\\
    1064076204520660765340892390794016569450447697366180324844094481748696014468725850230575433303712319077426059\\
    0194451222928976562084082101826372855649842925376937549435353840461071482489758019275067372745719883672588194\\
    0100672337049199097820906764832552036148970777808523180228351064955117345460277792810746801524160653864618983\\
    4337948430373685281741112506353180948763994508488951805348767798321262825290396359180119714479785513465900591\\
    7676725673195406402044738415237326183314226982646375043639054102846739631047484710558807459137190629486553263\\
    3682047674823464125548159054729808349791123139234637133532156038831544634205103503788620810915861120140001963\\
    1584024696369239876060359569279600736079363924979215359725649714781463252767724351718722681105102699276375324\\
    0384506247732338356021665611353327695267317862325770468749664904657967438733765479781622116932693334695677063\\
    5021205458687762701406349467620957817050413973635581810943606941670515524356485356571155740943898475421702809\\
    5328676008858976687822051383616629525783816610181607621664444979682725068595675237124932299708264808363528942\\
    0881448719330607429232987519041514099309662073557815006801007281373889310847641642859115421530144682582679087\\
    7704122351009188338945882237335685795239318377735088786938767247579794678803655234167516463130920845119715886\\
    1629372381818703314602304582053238969909674005980237110370063065319091822028589889211550783393412843377688225\\
    8259724711940968819253213857618049433082085183888609617589424587428398173100129687015187077690094944334828587\\
    2213002122928439661351711328209089339184016628978532673325879487978992297152378010478764657897256419801633437\\
    1191044029613777953398751698044971600424726889865075486667173790922976006634771710588891927968794784144642864\\
    0390361604745677735521915767160461155055726724680510417700985842245836701287078904459538130339710091582259422\\
    8854716508930449972389901715233901487226760880411435477734570358653676866482732143277138904893831788893482839\\
    8921704512034212740346203802309390227561424016315394336234930255229433906591437671357190277680112841734559224\\
    1061910219751200817622942287352522311334866074635676611790473798201395797694275652786481296475644510559592192\\
    5037860742127073978025713649127243100396941997934575230130759227211571857160592586971459591625321386075752952\\
    5544003583619000264703750680594854001448684682253399194158162164140451208798493385642564083380842659897312005\\
    3427210834537418677982977482186537995497695893831499050923162179304307398023044705293108189762496851692286412\\
    4542226785468585982758427794709522051985923218747787443933713380846371828374267034110361582105434445000107569\\
    7438512837097132759601051646936589845662309049274208250980265523794484372040768359655580591345562440380824641\\
    0555056100280634388850010559180534865615782040876613267184117826865359280973798164069174332239401502320843972\\
    0986490223261354824585537306076374143416480611047679622657325551992713203141841305913422787411508732810935943\\
    4099750075428932812216653454658943946152870537430456821805938197644782060254771056646975453519489660129170104\\
    9898265563420924817580790969390323469130003027957909353214386197982662082638608993159854578745426325002908736\\
    2354245942532329081241500134580749123106728764064079704232697614431982511373366042541057826985156167175610985\\
    5070279608393085353244064289317455900186428315023110772051461316286747252722283519899171835356319848937347312\\
    4318004900726204880833304677992173016897699726555630987195460587064880907045467298822393702017265882230319485\\
    4394770202237203884513509086524170646176746440689103820830267668970433966755216656727515566421372414462458733\\
    8434619732683278713362121933780651748772850991377493383980253592430061579343736847572995877169058890586459775\\
    0986722826639148526561660985651553673030485122801346167009245347434068804774796975838320677459988296108574280\\
    7711743575124944565258925741713531135688545896909650947896299287804044388058649499477734164166651132883018182\\
    7205957659186898447627033492019022767168172854802670625197079589468800919191194761421078679326513462393290089\\
    2639078856778542881708391822549842685628423583972580177918567091078826426763566941090746958028388721372314183\\
    5483024610678249708343793331328805041034749305031586884583078470875357211307871560884412140189387310458668024\\
    3725826730618059717310209671658945961201346064610156034490778245504504780087514532289326026078368350930352554\\
    2405951269777800116693154508932001606247340608601093274226101665837624606790771250033711014893945674496640447\\
    2012366425461908925285156607679796310892001962376469081066796453336396417222512862287261191364829346075360124\\
    7338729163524500533038072457698416303779261070622108918922138797178137880355644016125150577082980172203706013\\
    5689911773905446866787479890004708894835849731075700710709776728343798070730903589488825848747326158288250287\\
    3351437970551535656281749495270860409400865248288721051623572064345870404800519019299425156291196761358145354\\
    5170457271892655560997264386516345553684347663166042864172722172403677201047845701953577437940108878949604731\\
    2177991861785452882788240837676067407286825800433023930929911201392485164262815359182481802294790534060035416\\
    7424152934464274403996109614275383018031942010510110368207665799069073863719678680861925809798772709679551999\\
    4744347138701312009631885528265078802413619180039369306198123969046972215321863521329632645629362260463254280\\
    3145858269784476443612533592493615276908411866122991092331055119089870642103165519412293192030211497400476389\\
    7401931055697353915473697541464311064390315313474962233825653272796591469317437466812324513495575413412820477\\
    0449204575490326151571630803661655104267398461641657604821947727132097305756720974094675886948956263455030014\\
    1941185500708532443796337658208985229232308951041649019077255116796761990492069506645888710340189787706571162\\
    2842081300294443993348693306393786169064549490459878168313163366539588317729738837679613733675123846023804014\\
    3546716680423164986514316318419783274273599751563135214747688528490994035302257059307664180598830881458114869\\
    8407763661135734226970376857671681604815917310984273777977466589162692986238889075939297176220672439861314948\\
    8424169127650550087685504671833228800754719107396574684095090757030590956144811704113197610081283267701051340\\
    1542027279774528942622159593802881601102141623072904220445506381558604401853316676290227079284317616227539032\\
    4806509514581872945268177762493910525322503516841695869327632862900408912632577002634479460675665026602232572\\
    1413867050246320460295143763254301785435859728562383630682408375788828810018100538968641327644107234855637405\\
    1299505766710339663508615570070019716765404076025586134289830765268245969905786068713700274264234574740987976\\
    6357761447017190976608827084261547252601508674730918195638221580756809308343302702858039964113589953873168879\\
    6405807890573578659413803215626638616196443278921723799507463247989428698099794244336878240633234363776733955\\
    9518056160816143879716994299180607651707258668138331525218931503941332021627838295278504441097972942177497405\\
    0489024243402497604556533928560175811720437356100403078839480474376403021955175735701365025160398160999719831\\
    2931568432705460376985683217311501662192424988498341236881373817189771766287490829346627253129265632494726241\\
    9511759883537921065326449403975960182969781632351808300342577183122791916986914200391694999434708754521011256\\
    9799778824184473368480174030793959013265039421344719477748921365327272482477175026638209899110588371944652232\\
    9599874154721868553713695919667030628398906038052074222263267285013991372961837880439233936891465017034175331\\
    7258828287622272256649089178955743276102485576354658085186604080021292057138799441079140560811788563405086616\\
    6833497853183625550499970849579167398895802685384198433193834389084182981882957594334094477398407106673453012\\
    9987938208339397930696014845339294785494336208747000551132632644415520480733391207776878049481640527287345831\\
    6796932234066567348023789182310425860546263626806655043719784282164773224883489300170825670909897154762224520\\
    3443378769285901394475717147937341961932072256599661709604126168324549503402382324464036091705187775494226667\\
    7795512598589531055905285647652992819988971411518359187385511258712718723696134866854173533214174417997920252\\
    9471427169325065816894169694483171743047555453252472262232020275216589553182233964790014933010127533409422975\\
    4052267816664609308904924758492429336034187262515967829467044710704646999724287390961926451699352139966941922\\
    6292326374643725951555137540306793977580271045089606973285060220466860449604188134759204351492965925660851812\\
    9546232679058898783908413835826174153697035888676261214785212423766278942280788862744491741955045748380069394\\
    5142175038337110114804708952364102251668933733348080704388212861611189997375301969671138839231048984069865239\\
    8688122662914342676874382922451014920871164653395763937978117268964779220821030908457907007195231655806206254\\
    6863968067429848332890465157583974726299207021768157927473743086887696497121895738490447379939709843149889563\\
    5729569392425153233465351439334826629005804315761937056188907507933229907177031414100904526737160129389911422\\
    6692642752121758089982428307379923871101096707634282296484901440918768771846939087197576349549322883549852270\\
    0557069555989889836436708969834788207363701890666951076722021276709709497365287871478462244499242339347207994\\
    7607458238267257772863231626794456273956846404569278553301572733496108328059880633137153090101507168483800090\\
    8938480903729477865026130023108261903000169368084230108841126430160023583762092716546387582558659189497268952\\
    7014488816300876256671631262191070575629193284986210308296661955092737727930729484798551813428431614726082142\\
    7073915093155450702487541186910993134617165501890181613923533136545381341275748284897854986347640058427738089\\
    1291149553769352080951989427220600453078805201312190196274139628166326214227253023257937457964554802318561638\\
    1358465791282608720038205655613763051084804940479241223517287858831930167237181950413289506081013617185311670\\
    5498164579803758047365230909715630303463203273459065547897101631109941726288523352549265598384903704366371448\\
    0529491496719703617593204656211563764619556106936971896551222995748655220406522350497250238320344336108331917\\
    5247437750773933649095058216169688851967572996370472868518363507608160876048913807547791203420944956755619145\\
    1313150978443872190362189419878684586643585845595618770154177037028116332948497008725408402016916288662862893\\
    0739311569049045108961327683191628547961011020525788180897254987870172578770756927676013405891766246967858782\\
    8627224822443706883774067727687713945158424949381090647038854036823270319585116054285326689859793959425939724\\
    8438136932174118446137397991761393731551897112560124269393320811749587167847956682889557694317302751133850113\\
    8298747120718441477582389736954334210243016583549332012582270905745887176879751423602253869054812889940438487\\
    0388422239034236587890715879527894802885420360088070773249052217152053685142177330972211566356457507645711726\\
    3721073265110386145603830016114546857025453418812242851881225997717191974840523435334247814434837887643660857\\
    9682939203946121331243644002689556199545385687000684133106920232577332686571633713819645979045406013039488482\\
    3147022253685662706680833640719439459088056745436349578845626028221682150541613830524512080398784766308979913\\
    5265086807837092138668182051489211272882001922267060484626038007881783610156360271271712808803588678384169126\\
    7005192667592259518872257999078031080025204572798046374356115137373379928150159004928179317904303106187985009\\
    5037831548697133283663428896145043503351608739361799081938310024795881499906964238304555046059542416782642340\\
    8104900385421823403722310881582749572213500529196906385190640818379975695535592238609851314598821682470996359\\
    2229286865545525575595017536347784865227429267248369484643872695431956204338232000142127235023640989601903121\\
    9944955933686587678590180924627665532282196428078739791556691107136661312302926448505019055285749704407131676\\
    5193057521489403702987778775710127816954626675948567701000821739379119326922270948767670430870588277870204368\\
    9657368273130637406254843365202903268348057355201728144661865318938456781978837363131951227858814558925565120\\
    7843757910169902851012643487560100781178613399212472287256348170686338034890567943232234384549446873382894893\\
    2866232488684222240299585664726740629438741309065052649022373310828199292838953207050614876751381914989363636\\
    3391692517957600183435930077485093405538852422992902062931703852569701402972619485386652994643719390594215561\\
    6310325316109364129429956185033575730154791596707099146521546143490343602569824590619789235703626306055497549\\
    2388645719852664174071118433087751989570061121913149185955249260499273739911466731990153936027953803742410995\\
    6631235907152814953149389089617813638471207106393389699173874005347391233821449679423578320219040755685728983\\
    6769755096906499581845041348851273381829024573408537829172738281763401144592224223713291274118913854380289097\\
    0077290505729627312772227044709500842415153327489482022642867939868214689918289962095959964629107470359482998\\
    3407857124816371903318224550139729695016277210842990660746264963282740764595207917995917918484622570584627348\\
    5006098120080659744231323634560378964616745116983187183736087632891018149450945401979583255354498237377711680\\
    2671247828518777460018673953532671952182754802907041393435646725079792861526339776744157270165952603592885323\\
    8471481124630551154205889026528054965029764048504200889447497635347615325716431975536392144109014551118727501\\
    3691765648081394375865996000820638796842761345460800333739691751254235742537841459173185552777780093538125276\\
    7616434912390335833370896275147203260532732196462200484975248389957007837488033548793555103089465045028446899\\
    6512975079654347947866868137772377106752307767910043187200629656518377089393573388224716893207521584923382358\\
    3734873952596013593261368108525516931849995384182826189274394790977163461078276150136569070829327277115568417\\
    7200170368657514688251322304079502076370936037771884999382391749377303348809734837465373608193262760012418218\\
    1774520706654620119680305954399171623350208440475614873113097504538644510034498772026022834356426457411679722\\
    2919742875391160172487149159309746377740073580970025616636581415412271087856284352702404565728540646232775507\\
    5960769443763193578052264643718304636332949072948292517532450888059221301096638954585526903775046290804056972\\
    5524148026141673973101724486877630055105702267474486069784337443160350005250345481183412288786127036469230041\\
    3770220285449362347474070751525507529354023941782808128343582796921033348125578558592057424256311323046659043\\
    0597630553812986399507539676954764441603712170944769002686285096209510512607362134550399999056992339479774660\\
    1777362679119869166071171143465389815823934703181607119209230600484964862090339166555016922644266979490346760\\
    4705328708861354936380988024324607615465735246046895905408277023296783032917432101019784186947169751242709541\\
    7898208545924831405013110245964751164953446414255965182473415463200853095089195975653389259607493537514313891\\
    6216595568693681959655373467746221293373685097121149021673811801542091319586172204953515963274062088698527309\\
    9045570404392447494178766524123030111014926595037346903475794541920205195743602674736821412305622108400325951\\
    9878113555268726327839409689296551118517251421452232346854122904215296481789565496987596547127557838653186882\\
    3044492173913302767188204167903803930577822241774657950641484747449496045190023518957044034513955585582433481\\
    9797069635112208802726743956221781157141282009727459789638549563801221000317957089270314037786876931860468235\\
    2752444541551597246021942758013413334987287862977524743019672068380415261253600556864464354729463764013652948\\
    5994049744660524915490097084740230607901011726592502029193096754172984496239830183327855983614590280103233344\\
    8306097079103406307181271975013233440433364954887357912484201209037708596324274831198373966550269318587403765\\
    4645284197696058701974469727232308535639586216641851913635056738269828428280233669433204654578826157244410459\\
    8178482570110548435746348630346941314132786683912980853409042857084662233346478681888997093051506221649058388\\
    5163335210045730535848275976325438298772554869342084908529735890416200518055877065128393199889865590491984112\\
    2962531747580372461927624146319648822842336701623968671409825086823826466591604096341113147409723295681598759\\
    0932491059417625850045358415279299097189461140296565128890128152603446993456704436084877645882653653498102404\\
    3056484623387731382293927929110905722955483299141373981449715818654872612804452515317650876794480963308930890\\
    1350587651486029484226217615291817698707577749061815246815046891993914607522037037175970275639162458368863438\\
    2346701262281568404837196339502611398957631412450844236796342702797712219097679345565057611592406259083770380\\
    0778236349535270364531494839215708749121250171838192092790198087703883981845010802552297641213130719699800906\\
    3569075419836660364014575267982975236098416699776785978865202292442548716847259078916081425125824538606418934\\
    2721225599831795405593499860422364278999971027212842363625405158057109319923560401420407318697975834399286637\\
    2498305325475374393173956963574968458320086972229999145910324568419658653451594662604001829976922505870753859\\
    5359929868840957707850376312243479177448617613928208819743361580616121526456381668932220929137063010941337428\\
    8566346712977495634778091827349356515847860475397308388728214276471766620557939870322987208167260359236461856\\
    2540469797835303476629124954343724419351661537801260379810146062452808054136475643600087964320703164401994529\\
    5632944077653400383480676392172644005844552117547393127550444868155557054291816846069936359700218112621429828\\
    9810045365261797064019647540711030922465602885241109048848082204538940277596928121655552168933623289558870916\\
    5592826667409974328104601722405388349395372358524466126906817910131412631107332986133463524460976723833624687\\
    1796226582963052215176162174423629749766945639579970504790964880881060620424544510283732049287088616672728771\\
    2607513190726257891387293300611164045529774996580143947478820193079991185686076224369564827979675310605856416\\
    4712370035647828959274282719220617633739780209780147686431164573731590081870982425423334286722593183905441781\\
    6230181467826526881459263820122883674818602831915148474334872743876698975397708901068652552904930455544107569\\
    4671233464315628840647040987953942402315716402081777908661918899912326033777194183574588709483196180179236175\\
    6218738060218685425725092886049154611446816954463982709110142736026315202893479107973534155976592711615520257\\
    8236606295200792950675759732623770329693587330859819259084821667322617711635517774417923652850492073417640285\\
    4148650717792601440402835539414079381366983377088647344908927933831836630631716063770084998915292051693964443\\
    8216604752993305131640476262193218429800355849013973874783131117458859450866951973398339996923767105000341927\\
    0507413724770689394042753420456932178035475309918120924342950701665921597454958647624661672804060577324906410\\
    3106960592478041981149176196107935299945110957723431336120677588412270759804345145059752748778709354455807077\\
    9241685555801937414597749997315604921853640003484594638554684536433643103329082619009333334649948767697626169\\
    1094635355506745782415795477460215411399415022994437332621409310160411053748595382115305046008108563822771081\\
    3382476839175523681365921918734790740696845069888529800004638030062885348241728785175879607593973395075845978\\
    4234774772366311635678695415076974783158950140630984205944183414336313625078028992257538500247159110450478649\\
    3937790942800950628069894460928966023944090259262570867822758901705734330806434139816963277115558046690263455\\
    2394106350184502016182708127976536511085403755411758034605649316738731882039175530408172088326820821040155976\\
    9467335290388051799091203896110028012755391819293502929248840065496929139468562539714242050865537889275795815\\
    1849031709338316290703872472908609494240473701258316449135623974788101482033630070383358354071196386368974151\\
    8970669057502757227145384708144581137176442467521066865241315253252579988044799519311008558464202702056021509\\
    7803482910338543688828717693350232828437921351715260667352403016918811667569409747720909788854715680752552665\\
    0538596961848614010335479904730313210576668287916245281810162558355678941331466841926685570626424755471830179\\
    0967577242918654594420533212084944034710966020041352544337109472967109993081402208747936906368049309771970662\\
    1773173347120177738358873251159562057481454783173614764827244740235927138509987028702550955829763212659949861\\
    2934752066687488563453593694489524041716576775809226779568367984391133457584731322839659223973708077630238387\\
    9990698156467476396874615916474145967377191263214202097687842725235322928720708165104332976694711893346560080\\
    8860846459880306161274930198695951400703527757678633404528340353503417185815556251901397769044681170639539568\\
    5321268672051456721839560393540715379215317375574153005294523678342889239934758712144099942897663746508526368\\
    0024846655644856229691483984616169499410902535808168105754656121031628205616580964826962857891178918616473710\\
    9991900645449234816779145548615271655600431744768111597542274437837412507779223204608011360392543785480373479\\
    7293230327143459408511259713352268236959978903973149421399552993093323660380094954728088574486501034774479165\\
    4017997541530493676085544139453290088560911190291944303246006008903764818276499682528967311843691320096554645\\
    0393298487146127286210053050485952416002242605260047785309053791882588359924965933940798504901121815362158439\\
    0671138844902880206900914757115064615303583485153503677157831499119207578663040227739777568577494509906041296\\
    2910787175634443436178669336698435495228248530670642367771443436577619056378786471639255405948052296161225484\\
    1145545404305964166877715198539994124134368329966978464651536267068972848597760591950426573027594677142774596\\
    8406340748017015876787125768091001995281728911396767169880212700712809183575062566763709689034197355925580779\\
    7453278357346060376370825950541366930424118181961445165264136295329163972595381645515816175008336812774806061\\
    1392101215336685597714702816766848365784844089007386913062501572382442138966354848910613760126529901371200586\\
    6417400600273635195404805858467737134056484282148445517368708357347572257707700465979769191927090771393699823\\
    029479322892418710377180143250409036802278161018339108936551179\\
\end{longtable}}
\begin{proposition}\label{Prop_ex}
Let $q=p^h\equiv 1 \pmod 3$, $p\nmid R$, and consider $B\in \mathbb{F}_q$ satisfying $B^2-3B+9=0$.  Then $L_{1,B}$ has no roots in $\mu_{q^2+q+1}$. Furthermore,  the polynomial $x^{s+k(q^{k-1}+\cdots+q+1)}L(x^{q-1})$ permutes $\mathbb{F}_{q^3}$
for any $k,s\geq 0$ such that  $(s-(3kq+3j),q^2+q+1)=1$ and $(s+k(q^2+q+1),q-1)=1$.  
\end{proposition}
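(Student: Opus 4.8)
The plan is to deduce the ``Furthermore'' part from Theorem~\ref{var2} once we know that $L_{1,B}$ has no root in $\mu_{q^2+q+1}$, and to prove this root-freeness in two stages: an elementary treatment of the roots lying in $\mathbb F_q$, and a resultant computation for the remaining ones, this second stage being where the constant $R$ enters.

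For the reduction: by Proposition~\ref{Prop:k=3} applied with $A=1$ we have $L_{1,B}^q=x^{-(3kq+3j)}L_{1,B}$ for every $x\in\mu_{q^2+q+1}$, so $L_{1,B}$ is self-associated in the sense of Theorem~\ref{var2} with $\beta=1$, $t=3kq+3j$, and extension degree $3$. Hence, once we know that $L_{1,B}$ has no root in $\mu_{q^2+q+1}$, Theorem~\ref{var2} immediately gives that $x^{s+3(q^2+q+1)}L_{1,B}(x^{q-1})$ permutes $\mathbb F_{q^3}$ under the stated conditions on $k$ and $s$. So everything rests on the root-freeness claim.

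To prove it, first note that since $q\equiv 1\pmod 3$ we have $\mu_{q^2+q+1}\cap\mathbb F_q=\{1,\omega,\omega^2\}$ with $\omega$ a primitive cube root of unity, and $B^2-3B+9=0$ forces $B\neq-3$ and $B^3=-27$, so that $-B/3$ is a primitive cube root of unity. Every exponent appearing in $L_{1,B}$ is a multiple of $3$ (e.g.\ $(k+j)q+2j-k\equiv 3j\equiv 0\pmod 3$), whence $L_{1,B}(\omega^i)=1+B+1+1=B+3\neq 0$ for $i=0,1,2$; thus $L_{1,B}$ has no root in $\mathbb F_q$. Now let $x\in\mu_{q^2+q+1}\setminus\mathbb F_q$ and put $m:=(2k-j)q+(j+k)$. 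Using $x^{q^2+q+1}=1$ (so that $q^2\equiv-q-1$) one checks the congruences $3kq+3j\equiv m(q+2)$, $(k+j)q+2j-k\equiv m(q+1)$ and $3jq+3j-3k\equiv m(2q+1)$ modulo $q^2+q+1$, and therefore
\[
L_{1,B}(x)=x^{(k+j)q+2j-k}\bigl(x^{m}+x^{mq}+x^{mq^2}+B\bigr)=x^{(k+j)q+2j-k}\bigl(\Tr_{\mathbb F_{q^3}/\mathbb F_q}(x^{m})+B\bigr).
\]
So $L_{1,B}(x)=0$ is equivalent to $\Tr_{\mathbb F_{q^3}/\mathbb F_q}(P)=-B$, where $P:=x^{m}$ satisfies $N_{\mathbb F_{q^3}/\mathbb F_q}(P)=1$. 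If $P\in\mathbb F_q$ then $3P=-B$, so $P=-B/3$ has order $3$; but $3\mid m$ and $q^2+q+1$ is exactly divisible by $3$ with $(q^2+q+1)/3\equiv 1\pmod 3$, so $\{x^{m}:x\in\mu_{q^2+q+1}\}$ has order prime to $3$ and contains no element of order $3$, a contradiction. Hence $P\notin\mathbb F_q$, so $\mathbb F_q(P)=\mathbb F_{q^3}$ (and also $x\notin\mathbb F_q$), and $P$ is a root of an irreducible cubic $T^3+BT^2+cT-1$ over $\mathbb F_q$ for some $c\in\mathbb F_q$.

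It remains to derive a contradiction from the existence of such $x$. I would translate the data ``$x\in\mu_{q^2+q+1}$, $x^{m}=P$, $\Tr(P)=-B$, $N(P)=1$, $B^2-3B+9=0$'' into the existence of an $\mathbb F_q$-rational point of a prescribed type on an explicit affine curve $\cC$, following the strategy of \cite{BG2018,Bartoli2018}: expressing $x=\alpha_0+\alpha_1P+\alpha_2P^2$ in the $\mathbb F_q$-basis $\{1,P,P^2\}$ of $\mathbb F_{q^3}$, using the Frobenius description of the conjugates of $P$ in terms of $P$, $B$ and $c$, and rewriting $x^{m}=(x^q)^{2k-j}x^{j+k}$ together with $N(x)=1$, all the defining relations become polynomial identities over the ring $\mathbb Z[B]/(B^2-3B+9)$ in $\alpha_0,\alpha_1,\alpha_2,c$; eliminating variables yields $\cC$ and, ultimately, a univariate polynomial over $\mathbb Z$. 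The technical heart is then to show that the absolutely irreducible components of $\cC$ are never defined over $\mathbb F_q$ as long as $p\nmid R$ --- the gigantic integer $R$ being precisely the resultant (equivalently, the product of the exceptional characteristics) produced by this analysis, and it is the constraint $B^2-3B+9=0$ that keeps those components from being defined over $\mathbb F_q$. Since $\cC$ then has no $\mathbb F_q$-rational point of the required type (any small value of $q$ not handled generically being absorbed into $p\nmid R$), $L_{1,B}$ has no root in $\mu_{q^2+q+1}$, as needed. I expect this last step to be the genuine obstacle: performing the elimination while keeping the component structure of $\cC$ under control, and then carrying out the absolute-irreducibility and field-of-definition computation that outputs $R$.
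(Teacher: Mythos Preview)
Your reduction via Theorem~\ref{var2} and your trace identity
\[
L_{1,B}(x)=x^{(k+j)q+2j-k}\bigl(\Tr_{\mathbb F_{q^3}/\mathbb F_q}(x^{m})+B\bigr),\qquad m=(2k-j)q+(j+k),
\]
are both correct, and the latter is a pleasant structural observation that the paper does not isolate. Your treatment of the cases $x\in\mathbb F_q$ and $x^{m}\in\mathbb F_q$ is also fine.

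Where the proposal departs from the paper --- and where it has a real gap --- is the elimination step. The paper does \emph{not} use the moving basis $\{1,P,P^{2}\}$. It fixes once and for all a Kummer basis $\{1,\xi,\xi^{2}\}$ of $\mathbb F_{q^3}/\mathbb F_q$ with $\xi^{3}=s$ a noncube in $\mathbb F_q$, so that $\xi^{q}=\omega\xi$ for a primitive cube root of unity $\omega\in\mathbb F_q$ (this is where $q\equiv 1\pmod 3$ is used, and one checks $B=-3\omega$ or $-3\omega^{2}$). In that basis Frobenius is the explicit $\mathbb F_q$-linear map $x_0+x_1\xi+x_2\xi^{2}\mapsto x_0+\omega x_1\xi+\omega^{2}x_2\xi^{2}$, so for the specific instance $j=k=1$ treated in the paper both the norm condition and $L_{1,B}(x)=0$ become polynomial equations in $x_0,x_1,x_2$ with parameters $s,\omega,B$. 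A chain of MAGMA resultants then eliminates $x_0,x_1,x_2$, then $B$ via $B^{2}-3B+9=0$, then $\omega$ via $\omega^{2}+\omega+1=0$; what survives is $R\cdot s^{13608}$. Since $s\neq0$ and $p\nmid R$, the system has no solution at all. No curve-component or field-of-definition argument is used.

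Your plan to work in $\{1,P,P^{2}\}$ breaks down because $P^{q}$ is \emph{not} a polynomial in $P$ over $\mathbb F_q(B,c)$: the two remaining roots of $T^{3}+BT^{2}+cT-1$ are only available after a further quadratic extension of $\mathbb F_q(P)$, so you cannot express $x^{q}$ polynomially in $\alpha_0,\alpha_1,\alpha_2,c$ without adjoining an extra variable and relation. This is precisely the obstruction that the Kummer basis removes. Your reading of $R$ as the output of an absolute-irreducibility analysis is also more elaborate than what actually happens: $R$ is simply the integer content of the final resultant, and the paper shows the system has \emph{no} solutions over $\overline{\mathbb F_q}$ (not merely no $\mathbb F_q$-rational ones on some curve) whenever $p\nmid R$.
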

\proof
Let $s$ be a noncube in $\mathbb{F}_q$ and $k=s^{(q-1)/3}$. Then $\{1,\xi,\xi^2\}$ with $\xi^3=s$ is a basis of $\mathbb{F}_{q^3}$ over $\mathbb{F}_q$. We note that $k^2+k+1=0$, and so $B=-3k$ or $-3k^2$. An element $x=x_0+x_1 \xi+x_2 \xi^2$ with $x_0, x_1,x_2\in \mathbb{F}_q$ belongs to $\mu_{q^2+q+1}$ if and only if $s^2 x_2^3 - 3 s x_0 x_1 x_2 + s x_1^3 + x_0^3=0$. On the other hand, $L_{1,-3k}(x)=0$ if and only if
$\alpha_2 \xi^2+\alpha_1\xi+\alpha_0=0$ where
$$
\begin{array}{lll}
\alpha_0&=& 3 k s^2 x_2^3 - 3 k x_0^3 + s^4 x_2^6 + 3 s^3 x_0 x_1 x_2^4 - 7 s^3 x_1^3 x_2^3 -
        7 s^2 x_0^3 x_2^3 + 9 s^2 x_0^2 x_1^2 x_2^2 + 3 s^2 x_0 x_1^4 x_2 + s^2 x_1^6\\
        && +
        4 s^2 x_2^3 + 3 s x_0^4 x_1 x_2 - 7 s x_0^3 x_1^3 + 6 s x_0 x_1 x_2 - 2 s x_1^3 +
        x_0^6 + x_0^3 + 1,\\
\alpha_1&=&    3 k s^3 x_0 x_2^5 - 6 k s^3 x_1^2 x_2^4 + 3 k s^2 x_0^2 x_1 x_2^3 +
        3 k s^2 x_0 x_1^3 x_2^2 + 3 k s^2 x_1^5 x_2 - 6 k s x_0^4 x_2^2\\
        &&+
        3 k s x_0^3 x_1^2 x_2 - 6 k s x_0^2 x_1^4 + 6 k s x_0 x_2^2 - 3 k s x_1^2 x_2 +
        3 k x_0^5 x_1 + 6 k x_0^2 x_1 + 3 s^3 x_0 x_2^5 - 6 s^3 x_1^2 x_2^4\\
        &&+
        3 s^2 x_0^2 x_1 x_2^3 + 3 s^2 x_0 x_1^3 x_2^2 + 3 s^2 x_1^5 x_2 - 6 s x_0^4 x_2^2
        + 3 s x_0^3 x_1^2 x_2 - 6 s x_0^2 x_1^4 - 3 s x_0 x_2^2 - 3 s x_1^2 x_2\\ &&+
        3 x_0^5 x_1 + 6 x_0^2 x_1,\\
\alpha_2&=&    -3 k s^3 x_1 x_2^5 + 6 k s^2 x_0^2 x_2^4 - 3 k s^2 x_0 x_1^2 x_2^3 +
        6 k s^2 x_1^4 x_2^2 - 3 k s x_0^3 x_1 x_2^2 - 3 k s x_0^2 x_1^3 x_2\\
        && -
        3 k s x_0 x_1^5 - 6 k s x_1 x_2^2 - 3 k x_0^5 x_2 + 6 k x_0^4 x_1^2 -
        6 k x_0^2 x_2 + 3 k x_0 x_1^2 - 9 x_0^2 x_2.\\
\end{array}
$$
By using that $\alpha_0=\alpha_1=\alpha_2=0$ and $s^2 x_2^3 - 3 s x_0 x_1 x_2 + s x_1^3 + x_0^3=0$, and by eliminating $x_1,x_2,x_3$, we obtain that $s=0$, which is  impossible. A similar argument holds for $B=-3k^2.$ The computations are done in MAGMA.
\endproof



\section{Permutation polynomials from bijections between $\mu_{q+1}$ and $\mathbb{F}_q \cup \{ \infty\}$}\label{Sec5}

We define the image of $H(x)=\sum_{i =0}^{n}\alpha_ix^i\in \mathbb{F}_{q^2}[x]$ under the Frobenius map $\sigma$ of $\mathbb{F}_{q}$ by $H^{\sigma}(x)=\sum_{i =0}^{n}\alpha_i^q x^i$. Our generalization of Theorem~\ref{Zieve2013Theorem1.2} is the following.

\begin{theorem}\label{mainTheorem2}
Let $H,L,M \in \mathbb{F}_{q^2}[x]$ be such that $H$ is monic of degree $n$, and $L$, $M$ are $(\beta,d)$-self-associated.
Suppose that
\begin{enumerate}[{\normalfont(i)}]
\item   $L/M$ is a bijection from $\mu_{q+1}$ into $\mathbb{F}_{q}\cup \{\infty\}$,
\item the algebraic curve $\prod _{\xi \in \mathbb{F}_q}(H(x)-\xi H(y))$ has no affine $\mathbb{F}_{q}$-rational points off the line $x=y$, and
\item $H^{\sigma}-H$  has no roots in $\mathbb{F}_q$.
\end{enumerate}
For any integer $k \geq 0$,  the polynomial
$x^{dn+k(q+1)}(H\bullet L/M)(x^{q-1})$
permutes $\mathbb{F}_{q^2}$ if and only if $(dn+k(q+1),q-1)=1$.
\end{theorem}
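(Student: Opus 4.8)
The plan is to reduce the statement to the Tucker--Zieve criterion and then to a single injectivity claim. First I would apply Theorem~\ref{ppcriterion} over $\mathbb{F}_{q^2}$, taking the divisor there to be $q-1$ (so that the relevant roots of unity form $\mu_{(q^2-1)/(q-1)}=\mu_{q+1}$) and the exponent $r=dn+k(q+1)$; this shows that $x^{dn+k(q+1)}(H\bullet L/M)(x^{q-1})$ permutes $\mathbb{F}_{q^2}$ if and only if $(dn+k(q+1),q-1)=1$ \emph{and} the map $x\mapsto x^{dn+k(q+1)}(H\bullet L/M)(x)^{q-1}$ permutes $\mu_{q+1}$. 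Thus the whole theorem reduces to showing that, under hypotheses (i)--(iii), this last map always permutes $\mu_{q+1}$; the stated equivalence then follows, in both directions.

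Write $g=H\bullet L/M=\sum_{i=0}^{n}\alpha_iL^iM^{n-i}$, where $H=\sum_{i=0}^{n}\alpha_ix^i$ is monic, and record the small but crucial fact that $H^{\sigma}(\rho)=H(\rho)^{q}$ for every $\rho\in\mathbb{F}_q$. From this and (iii) one gets that $H$ and $H^{\sigma}$ have no root in $\mathbb{F}_q$: a root $\rho\in\mathbb{F}_q$ of $H$ would give $H^{\sigma}(\rho)=H(\rho)^q=0$, whence $(H^{\sigma}-H)(\rho)=0$. Combining this with (i), I would check that $g$ has no root on $\mu_{q+1}$: the unique point of $\mu_{q+1}$ lying over $\infty$ is the only zero of $M$ there, and at it $L\neq0$, so $g=\alpha_nL^n=L^n\neq0$; at every other $x\in\mu_{q+1}$ one has $L(x)/M(x)\in\mathbb{F}_q$ and $g(x)=M(x)^nH(L(x)/M(x))\neq0$. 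Then comes the key computation: for $x\in\mu_{q+1}$, substituting the self-association relations $L^q=\beta x^{-d}L$, $M^q=\beta x^{-d}M$, using $(H(u))^q=H^{\sigma}(u^q)$ and $x^{q+1}=1$, I expect
\[
x^{dn+k(q+1)}g(x)^{q-1}=\beta^{n}\,\frac{H^{\sigma}(L(x)/M(x))}{H(L(x)/M(x))}=\beta^{n}\,H(L(x)/M(x))^{q-1},
\]
with value $\beta^{n}$ at the zero of $M$ (reading $H^{\sigma}/H$ at $\infty$ as the ratio of leading coefficients, which is $1$ since $H$ is monic). Hence the map on $\mu_{q+1}$ is the composition $\phi\circ(L/M)$, where $\phi\colon\mathbb{F}_q\cup\{\infty\}\to\mu_{q+1}$ is given by $\phi(\rho)=\beta^{n}H(\rho)^{q-1}$ and $\phi(\infty)=\beta^{n}$.

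By (i) the inner map $L/M$ is a bijection $\mu_{q+1}\to\mathbb{F}_q\cup\{\infty\}$, so the composition permutes $\mu_{q+1}$ exactly when $\phi$ is a bijection between these two $(q+1)$-element sets, i.e. exactly when $\phi$ is injective. For distinct $\rho_1,\rho_2\in\mathbb{F}_q$ with $\phi(\rho_1)=\phi(\rho_2)$ one has $(H(\rho_1)/H(\rho_2))^{q-1}=1$, so $H(\rho_1)=\xi H(\rho_2)$ for some $\xi\in\mathbb{F}_q^{*}$, making $(\rho_1,\rho_2)$ an affine $\mathbb{F}_q$-point of $\prod_{\xi\in\mathbb{F}_q}(H(x)-\xi H(y))=0$ off the line $x=y$, contrary to (ii). If $\phi(\rho)=\phi(\infty)$ for some $\rho\in\mathbb{F}_q$, then $H(\rho)^{q-1}=1$, so $H(\rho)\in\mathbb{F}_q^{*}$, hence $H^{\sigma}(\rho)=H(\rho)^q=H(\rho)$ and $(H^{\sigma}-H)(\rho)=0$, contrary to (iii). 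So $\phi$ is injective, hence bijective, the map on $\mu_{q+1}$ is a permutation, and by the first paragraph the theorem follows.

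The main obstacle, modest as it is, sits in the second paragraph: executing the key identity carefully --- in particular handling the point where $M$ vanishes and confirming the map lands in $\mu_{q+1}$ --- and recognizing that conditions (ii) and (iii) are precisely the curve condition and the root condition that injectivity of $\phi$ requires. The remainder is a routine unwinding of the $(\beta,d)$-self-association relations together with a direct appeal to Theorem~\ref{ppcriterion}.
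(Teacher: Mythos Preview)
Your proof is correct and follows essentially the same route as the paper: reduce via Theorem~\ref{ppcriterion}, verify $H\bullet L/M$ is nonvanishing on $\mu_{q+1}$, use the self-association relations to rewrite the map as $\beta^n H(L/M)^{q-1}$, and then deduce injectivity from (ii) and (iii). Your explicit introduction of $\phi$ and your careful treatment of the value at the zero of $M$ (where you compute $\beta^n$ directly) are in fact slightly cleaner than the paper's handling of the $z=\infty$ case, but the substance is identical.
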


\proof
By Theorem~\ref{ppcriterion}, the polynomial
$x^{dn+k(q+1)}(H\bullet L/M)(x^{q-1})$ permutes $\mathbb{F}_{q^2}$ if and only if $(dn+k(q+1),q-1)=1$ and
$x^{dn+k(q+1)}\left(H \bullet L/M \right)^{q-1}$
permutes $\mu_{q+1}$. Suppose that $H(x)=\sum_{i =0}^{n}\alpha_ix^i$ with $\alpha_n=1$.
We observe that the polynomial
$$H \bullet L/M=\sum_{i =0}^{n}\alpha_i L^iM^{n-i}$$
has no roots in $\mu_{q+1}$.
In fact, say that  $r \in \mu_{q+1}$ is a root of $H \bullet L/M$.  If $M(r)=0$ then $\alpha_n=1$ implies that $L(r)=0$, which is not possible since $L$ and $M$ have no common roots in $\mu_{q+1}$. If $M(r) \neq 0$ then $L(r)/M(r)$ is a root of $H$ in $\mathbb{F}_{q}$ that satisfies $0=H(L(r)/M(r))=H(L(r)/M(r))^q=H^\sigma(L(r)/M(r))$, which gives another contradiction by (iii).
In order to show that $x^{dn+k(q+1)}\left(H \bullet L/M \right)^{q-1}$ permutes $\mu_{q+1}$, take $x \in \mu_{q+1}$. Using that $M^q=\beta x^{-d}M$ and  $L^q=\beta x^{-d}L$,
we can rewrite the latter polynomial as
\begin{eqnarray*}
x^{dn+k(q+1)}\dfrac{(H \bullet L/M)^{q}}{H \bullet L/M}&=&x^{dn}\frac{\sum_{i =0}^{n}\alpha_i^q L^{qi}M^{q(n-i)}}{\sum_{i =0}^{n}\alpha_i L^iM^{n-i}}\\
&=&x^{dn}\frac{\sum_{i =0}^{n}\alpha_i^q \beta^{n}x^{-dn}L^iM^{n-i}}{\sum_{i =0}^{n}\alpha_i L^iM^{n-i}}\\
&=&\beta^{n}\frac{\sum_{i =0}^{n}\alpha_i^q L^iM^{n-i}}{\sum_{i =0}^{n}\alpha_i L^iM^{n-i}}\\
&=&\beta^{n}\frac{\sum_{i =0}^{n}\alpha_i^q (L/M)^{i}}{\sum_{i =0}^{n}\alpha_i (L/M)^{i}}\\
&=&\beta^{n}\frac{(H (L/M))^q}{H (L/M)}
\end{eqnarray*}
where the last step follows from (i).
We aim now to show that there is no pair $(x,y)\in \mu_{q+1}^2$ such that $x\neq y$ and
\begin{equation*}
\left(H\left(\frac{L(x)}{M(x)}\right)\right)^{q-1}=\left(H\left(\frac{L(y)}{M(y)}\right)\right)^{q-1}.
\end{equation*}
Let $z=L(x)/M(x)$ and $w=L(y)/M(y)$. If $z$ and $w$ are both in $\mathbb F_q$, then
$$(H(z))^{q-1}=(H(w))^{q-1}  \iff  \exists \, \xi \in \mathbb{F}_q: \  H(z)=\xi H(w),$$
which is not possible by (ii). By (i), it remains to consider $z=\infty$. If $w=\infty$ then  $x=y$. If $w\in \mathbb{F}_q$ then $H^{\sigma}(w) =
(H(w))^q=H(w)$,
a contradiction by (iii).
\endproof

We now prove Theorem~\ref{Zieve2013Theorem1.2} using Theorem~\ref{mainTheorem2}.

\medskip

\textit{Proof of Theorem~\ref{Zieve2013Theorem1.2}.}
 Consider $L(x)=\delta x-\beta\delta^q$ and $M(x)=x-\beta$ where  $\beta\in \mu_{q+1}$ and $\delta \in \mathbb{F}_{q^2}\setminus \mathbb{F}_q$. Clearly, $L$ and $M$ are  $(-\beta^{-1},1)$-self-associated. By setting  $-\beta = \epsilon$ and using Lemma~\ref{bijinfty}, we see that $L/M=(\delta \epsilon x+ (\delta\epsilon)^q)/(\epsilon x+\epsilon^q)$   is a bijection from $\mu_{q+1}$ to $\mathbb{F}_q \cup \{\infty\}$.  Let $H(x)=x^n-\delta$. We have that $\prod_{\xi\in \mathbb{F}_q} (H(x)-\xi H(y))=\prod_{\xi\in \mathbb{F}_q} (x^n-\xi y^n -\delta +\xi\delta)$ has no affine $\mathbb{F}_{q}^2$-rational points off the line $x=y$ if and only if $(n,q-1)=1$.  In fact,  this is true when $\xi \neq 1$ since $\delta\notin \mathbb{F}_q$. If $\xi=1$ then $x^n-y^n=0$ has $\mathbb{F}_{q}^2$-rational points off the line $x=y$ if and only if $(n,q-1)>1$. Furthermore, the  polynomial $\varphi(x)=(-\delta)^q-\delta$ is nonzero since $\delta \notin \mathbb{F}_q$. We conclude that
 \begin{eqnarray*}
 x^{n+k(q+1)}M(x^{q-1})^nH\left(\frac{L(x^{q-1})}{M(x^{q-1})}\right) &=& x^{n+k(q+1)}M(x^{q-1})^n \left(\frac{L(x^{q-1})^n}{M(x^{q-1})^n}-\delta\right)\\
 &=& x^{n+k(q+1)}\left(L(x^{q-1})^n-\delta M(x^{q-1})^n\right)\\
 &=& x^{n+k(q+1)}\left((\delta x^{q-1}-\beta\delta^q)^n -\delta(x^{q-1}-\beta)^n\right)
 \end{eqnarray*}
permutes $\mathbb{F}_{q^2}$ if and only if $(n,q-1)=1$ and $(n+k(q+1),q-1)=1$. The latter condition is equivalent to $(n(n+2k),q-1)=1$.
\hfill$\square$

\medskip

We can obtain more families of permutation polynomials over $\mathbb{F}_{q^2}$ by applying Theorem~\ref{mainTheorem2} with other polynomials $H$.

\begin{corollary}\label{search}
Let $f,g\in \mathbb{F}_{q}[x]$ be monic polynomials such that  $f$ has degree $n-1$ with no roots in $\mathbb{F}_q$ and $g$ permutes $\mathbb F_q$. Let $\gamma\in \mathbb{F}_{q^2}^*$ be such that $\gamma^q=-\gamma$. Then the polynomial $H=gf-\gamma f$ satisfies the following conditions:
\begin{enumerate}[{\normalfont(i)}]
\item $\prod _{\xi \in \mathbb{F}_q}(H(x)-\xi H(y))$ has no $\mathbb{F}_{q}$-rational points off the line $x=y$ and
\item $H^\sigma -H$ has no roots in $\mathbb{F}_q$.
\end{enumerate}
\end{corollary}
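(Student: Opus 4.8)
The plan is to verify hypotheses (i) and (ii) of Theorem~\ref{mainTheorem2} by a direct computation, exploiting that $H=(g-\gamma)f$ factors and that $\gamma$ generates the quadratic extension. First I record two preliminary facts. Since $\gamma\in\mathbb F_{q^2}^{*}$ satisfies $\gamma^{q}=-\gamma$, the characteristic must be odd: otherwise $-\gamma=\gamma$, forcing $\gamma\in\mathbb F_q$, whence $H\in\mathbb F_q[x]$ and $H^{\sigma}-H=0$, so (ii) could not hold. Hence $\gamma\notin\mathbb F_q$, the pair $\{1,\gamma\}$ is an $\mathbb F_q$-basis of $\mathbb F_{q^2}$, and $2\neq 0$. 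For any $a\in\mathbb F_q$ we then have $H(a)=g(a)f(a)-\gamma f(a)$, whose coordinate along $\gamma$ in this basis equals $-f(a)$, which is nonzero because $f$ has no root in $\mathbb F_q$. In particular $H(a)\neq 0$ (indeed $H(a)\notin\mathbb F_q$) for every $a\in\mathbb F_q$.

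Next I would settle (i). An affine point $(a,b)\in\mathbb F_q^{2}$ lies on $\prod_{\xi\in\mathbb F_q}\bigl(H(x)-\xi H(y)\bigr)=0$ exactly when $H(a)=\xi H(b)$ for some $\xi\in\mathbb F_q$; since $H(b)\neq 0$, this is equivalent to $H(a)/H(b)\in\mathbb F_q$. Factoring, $H(a)/H(b)=\dfrac{f(a)}{f(b)}\cdot\dfrac{g(a)-\gamma}{g(b)-\gamma}$, and the first factor lies in $\mathbb F_q^{*}$, so membership in $\mathbb F_q$ reduces to $\dfrac{g(a)-\gamma}{g(b)-\gamma}\in\mathbb F_q$ (the denominator is nonzero since $g(b)\in\mathbb F_q$ while $\gamma\notin\mathbb F_q$). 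Writing that ratio as $c\in\mathbb F_q$ and comparing $\gamma$-coordinates in $g(a)-\gamma=c\,g(b)-c\gamma$ forces $c=1$, hence $g(a)=g(b)$; as $g$ permutes $\mathbb F_q$, this gives $a=b$. So the curve has no affine $\mathbb F_q$-rational point off the line $x=y$, which is (i).

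For (ii), applying the Frobenius $\sigma$ to the coefficients of $H=gf-\gamma f$ fixes the $\mathbb F_q$-coefficients of $g$ and $f$ and sends $\gamma$ to $\gamma^{q}=-\gamma$, so $H^{\sigma}=gf+\gamma f$ and $H^{\sigma}-H=2\gamma f$. Since $2\gamma\neq 0$, the roots of $H^{\sigma}-H$ are precisely those of $f$, and by hypothesis none of these lies in $\mathbb F_q$; this is (ii).

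I do not expect a genuine obstacle here: the only point that needs a little care is the linear-algebra bookkeeping in (i) --- one must note that $H(a),H(b)$ are both nonzero, so that ``$H(a)=\xi H(b)$ for some $\xi$'' genuinely reduces to the ratio lying in $\mathbb F_q$ with no degenerate $\xi=0$ branch, that the $\gamma$-coordinates $-f(a),-f(b)$ are nonzero (this is exactly where ``$f$ has no root in $\mathbb F_q$'' enters), and that $g(a)\neq g(b)$ for $a\neq b$ (this is exactly where ``$g$ permutes $\mathbb F_q$'' enters). Everything else is a one-line manipulation.
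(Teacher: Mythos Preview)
Your proof is correct and follows essentially the same route as the paper's: both establish (ii) via $H^{\sigma}-H=2\gamma f$, and both reduce (i) to $g(a)=g(b)$ by separating the ``$\mathbb F_q$-part'' and the ``$\gamma$-part'' of the equation $H(a)=\xi H(b)$ --- you do this by comparing coordinates in the basis $\{1,\gamma\}$, while the paper raises to the $q$-th power and adds/subtracts, which is the same linear-algebra operation in disguise. Your explicit remark that the hypothesis $\gamma^{q}=-\gamma$ with $\gamma\neq 0$ forces odd characteristic is a useful clarification that the paper leaves implicit.
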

\proof
The polynomial  $H^\sigma(T) -H(T)=(g(T)f(T)+\gamma f(T))-(g(T)f(T)-\gamma f(T))=2\gamma f(T)$ has no roots in $\mathbb{F}_q$, by hypothesis.
Also, $H=(g-\gamma)f$. If $\xi=0$ then $H(x)-\xi H(y)=0$ gives that $H(x)=0$ and it has no roots in $\mathbb{F}_q$. If $\xi\neq 0$ then
$$H(x)-\xi H(y)=0 \iff g(x)f(x)-\gamma f(x)=\xi(g(y)yf(y)-\gamma f(y)).$$
Raising both sides to the power $q$ yields that $g(x)f(x)+\gamma f(x)=\xi(g(y)f(y)+\gamma f(y))$, which is equivalent to  $g(x)f(x)  =\xi g(y)f(y)$ and $f(x)=\xi f(y)$. Since $f(x)$ and $f(y)$ cannot be zero for $x,y \in \mathbb{F}_q$, we obtain that $g(x)=g(y)$. This implies that $x=y$ since $g$ permutes $\mathbb{F}_q$.
\endproof

Note that we cannot find self-associated polynomials of degree two over $\mathbb{F}_q$ satisfying the conditions in Corollary \ref{search}. In fact, such a polynomial would have only one root in $\mu_{q+1}$ as if it has one root in $\mu_{q+1}$, then  the other one also belongs to $\mu_{q+1}$.

\begin{ex}\label{Ex1}
 Let $q=p^{2n+1}$ where $p\equiv7,17,23 \text{ or } 33 \pmod{40}$. Consider $\alpha \in \mu_{q+1}$ such that $\alpha^2-3\alpha+1=0$, i.e.  $\alpha=\frac{3\pm \sqrt{5}}{2}$.
It is easily seen that  $\sqrt{\alpha}\notin \mu_{q+1}$ since $5$ is not a square in $\mathbb{F}_q$ ($p \equiv \pm2 \pmod{5}$). Also, $\alpha$ is a square in $\mathbb{F}_q$  such that  $\sqrt{\alpha}\notin \mu_{q+1}$ since $2$ is a square in $\mathbb{F}_q$ ($p\equiv \pm1\pmod 8$).
The polynomials $L(x)=(x-\alpha)(\alpha x^2-1)$  and $M(x)=(\alpha x-1)(x^2-\alpha)$ are $(\alpha^2,3)$-self-associated, and clearly $L/M$ is a bijection from $\mu_{q+1}$ to $\mathbb{F}_q \cup \{\infty\}$. The curve associated with $L/M$ splits as $(xy-y+1)(xy+x+1)$. The components do not have points in $\mu_{q+1}^2$.
\end{ex}

\section{Acknowledgements}
The authors thank Michael Zieve for many valuable conversations. 
 
The first author was partially supported by the Italian Ministero dell'Istruzione, dell'Universit\`a
e della Ricerca (MIUR) and  the Gruppo Nazionale per le Strutture Algebriche, Geometriche e le loro Applicazioni (GNSAGA-INdAM).

The second author received support for this project provided by a PSC-CUNY award, \#60235-00 48, jointly funded by The Professional Staff Congress and The City University of New York.

Part of this work was carried out when the authors were visiting IMPA and UFRJ in Rio de Janeiro, Brazil.  We thank the warm hospitality we received from both institutes.

\bibliographystyle{abbrv}

\begin{thebibliography}{10}

\bibitem{Bartoli2018}
D.~Bartoli.
\newblock On a conjecture about a class of permutation trinomials.
\newblock Submitted.


\bibitem{BG2018}
D.~Bartoli and M.~Giulietti.
\newblock Permutation polynomials, fractional polynomials, and algebraic curves.
\newblock Finite Fields Appl. {\bf 51}, 1--16, 2018.

\bibitem{BQ2017}
D.~Bartoli and L.~Quoos.
\newblock Permutation polynomials of the type $x^r g(x^{s})$ over ${\mathbb {F}}_{q^{2n}}$.
\newblock Des. Codes Cryptogr., DOI: 10.1007/s10623-017-0415-8.

\bibitem{Hou2015} X.~Hou.
\newblock Permutation polynomials over finite fields -- a survey of recent advances.
\newblock Finite Fields Appl. {\bf 32}, 82--119, 2015.

\bibitem{L2007}
Y.~Laigle-Chapuy.
\newblock Permutation polynomials and applications to coding theory.
\newblock Finite Fields Appl. {\bf 13}, 58--70, 2007.

\bibitem{LM1984}
R.~Lidl and W. B.~M\"{u}ller.
\newblock Permutation polynomials in RSA-cryptosystems.
\newblock Advances in Cryptology, Springer, Boston, MA, 1984.

\bibitem{LN1997}
R.~Lidl and H.~Niederreiter.
\newblock Finite Fields, second ed., Encyclopedia Math. Appl., vol. 20, Cambridge University Press, Cambridge,
1997.574

\bibitem{M1993}
G.L.~Mullen.
\newblock Permutation polynomials over finite fields.
\newblock In: Proc. Conf. Finite Fields and Their Applications, in: Lect. Notes
Pure Appl. Math. {\bf 141}, Marcel Dekker, 131--151, 1993.

\bibitem{MP2013}
G.L.~Mullen and D.~Panario.
\newblock Handbook of Finite Fields.
\newblock Chapman and Hall/CRC, 2013.

\bibitem{SH1998}
J.~Schwenk and  K.~Huber.
\newblock Public key encryption and digital signatures based on permutation polynomials.
\newblock Electronics Letters {\bf 34}(8),  759--760, 1998.

\bibitem{TZ2000}
T.J.~Tucker and M.E.~Zieve.
\newblock Permutation polynomials, curves without points, and Latin squares.
\newblock preprint, 2000.

\bibitem{Zieve2013}
M.E.~Zieve.
\newblock Permutation polynomials on $\mathbb{F}_q$ induced from bijective R\'edei functions on subgroups of the multiplicative group of $\mathbb{F}_q$.
\newblock {\em arXiv:1310.0776v2, 7 Oct 2013.}


\end{thebibliography}

\end{document}